\providecommand{\U}[1]{\protect\rule{.1in}{.1in}}
\def\ps@pprintTitle{\let\@oddhead\@empty
\let\@evenhead\@empty
\let\@oddfoot\@empty
\let\@evenfoot\@oddfoot
}
\newtheorem{theorem}{Theorem}
\newtheorem{corollary}[theorem]{Corollary}
\newtheorem{lemma}[theorem]{Lemma}
\newenvironment{proof}[1][Proof]{\noindent\textbf{#1.} }{\ \rule{0.5em}{0.5em}}
\begin{document}
%
\begin{frontmatter}%


%

\title{Independent sets in chain cacti}%

%

\author{Jelena Sedlar}%
%

\address
{University of Split, Department of mathematics, Matice hrvatske 15, 21000 Split, Croatia}%
%

\begin{abstract}%

In this paper chain cacti are considered. First, for two specific classes of
chain cacti (orto-chains and meta-chains of cycles with $h$ vertices) the
recurrence relation for independence polynomial is derived. That recurrence
relation is then used in deriving explicit expressions for independence number
and number of maximum independent sets for such chains. Also, the recurrence
relation for total number of independent sets for such graphs is derived.
Finaly, the proof is provided that orto-chains and meta-chains are the only
extremal chain cacti with respect to total number of independent sets
(orto-chains minimal and meta-chains maximal).%

\end{abstract}%
%

\begin{keyword}%


Cactus graph \sep Chain cactus graph \sep Independence set \sep Independence polynomial



\MSC[2008] 05C69 \sep05C35%

\end{keyword}%
%

\end{frontmatter}%



\section{Introduction}

The notion of cactus graph first appeared in scientific literature in 1950's.
Then such graphs were called Husimi trees after the author of the paper which
motivated their introduction (\cite{Husimi1950}), and which was about cluster
integrals in statistical mechanics. The same graphs and their generalizations
also served as simplified models of real lattices (\cite{Monroe2004}), they
were useful in the theory of electrical and communication networks
(\cite{Zmazek2005}) and in chemistry (\cite{Zmazek2003}). Enumerative aspects
of such graphs were also studied in various papers (\cite{HararyN1953}%
,\cite{HararyU1953}), and finally summarized in classical monograph on graph
enumeration by Harary and Palmer (\cite{HararyP1973}).

Later, these graphs were named cactus graphs in mathematical literature. The
interest for such graphs has arisen again recently, since it was noted that
some NP-hard problems can be solved in polynomial time for that class of
graphs (\cite{Zmazek2003}). Chain cacti were also studied from aspect of
matchings (\cite{Farrell1987}, \cite{Farrell1998}, \cite{Farrell2000},
\cite{Farrell2000b}). The class of graphs very similar to cactus graphs, i.e.
block-cactus graphs, was studied recently too (\cite{Zverovic1998}). This
paper is motivated by the paper of T. Do\v{s}li\'{c} and F. M\aa l\o y
(\cite{Doslic2010}) in which they presented recurrence relations and/or
explicit formulas for various invariants related with matchings and
independent sets for two specific kind of chain hexagonal cacti. Also, they
provided proof that those two kinds of chain hexagonal cacti are extremal
among all chain hexagonal cacti with respect to total number of matchings in a
graph and with respect to total number of independent sets in graphs. In the
same paper they proposed generalizing those results for general chain cacti.
The proposed generalization for matchings has been presented in
\cite{BianKina2011}. In this paper the generalization of results by T.
Do\v{s}li\'{c} and F. M\aa l\o y for independent sets is presented, and
therefore this paper nicely supplements \cite{BianKina2011} in generalizing
\cite{Doslic2010}.

The present paper is organized as follows. In the section 'Preliminaries' we
introduce some basic notation, the notions about independence and the classes
of graphs with which we deal throughout the paper. Third section 'Main
results' is divided in three parts. The first part of main results is about
ortho-chains, which is special class of chain cacti for which independence
polynomials and some results about size and number of maximum independent sets
are provided. The second part of main results is about meta-chains, another
special class of chain graphs for which the same results as for ortho-chains
are provided. Finally, the third part of main results provides the proof of
extremality of ortho-chains and meta-chains with respect to total number of
independent sets.

\section{Preliminaries}

All graphs considered in this paper will be finite and simple. For a graph $G
$ we denote the set of its vertices with $V(G)$ (or just $V$)\ and the set of
its edges with $E=E(G)$ (or just $E$). Subgraph of $G$ induced by set of
vertices $V^{\prime}\subseteq V$ will be denoted with $G\left[  V^{\prime
}\right]  .$ For a vertex $v$ (or set of vertices $V^{\prime}\subseteq V$) of
$G$ we will denote with $G-v$ (or with $G-V^{\prime}$) subgraph of $G$ induced
by $V\backslash\left\{  v\right\}  $ (or by $V\backslash V^{\prime}$). For an
edge $e$ (or set of edges $E^{\prime}\subseteq E$) of $G$ we will denote with
$G-e$ (or with $G-E^{\prime}$) subgraph of $G$ obtained by deleting edge $e$
(or by deleting set of edges $E\backslash E^{\prime}$). For a vertex $v$ of
graph $G$ we will denote with $N[v]$ set of vertices consisting of $v$ and all
vertices of $G$ adjacent to $V$. We say that vertex $v$ of connected graph $G$
is cut vertex if $G-v$ is disconnected.

We say that set of vertices $S\subseteq V$ of graph $G$ is \emph{independent
set} if no two vertices of $S$ are adjacent in $G.$ \emph{Size} of an
independent set $S$ is number of vertices contained in $S.$ An independent set
of the largest possible size is called \emph{maximum independent set}. The
size of maximum independent set of graph $G$ is called \emph{independence
number} (or the \emph{stability number}) of graph $G$ and is denoted with
$\alpha(G).$ The \emph{independence polynomial} of graph $G$ is defined with
\[
i(G;x)=\sum_{k=0}^{\alpha(G)}\Psi_{k}(G)x^{k}%
\]
where $x$ is a formal variable and $\Psi_{k}(G)$ denotes number of independent
sets in graph $G$ of size $k$. Obviously, $\Psi_{0}(G)=1$ and $\Psi
_{1}(G)=\left\vert V\right\vert .$ Setting $x=1$ in $i(G;x)$ we obtain number
of all independent sets in graph $G$ and denote it with $\Psi(G)$. For the
notation simplicity sake, we will often write $i(G)$ instead of $i(G;x)$ where
it doesn't lead to confusion. The following properties of independence
polynomials are well known.

\begin{theorem}
\label{tm_I1}Let $G$ be a graph and $v$ its vertex. Then
\[
i(G;x)=i(G-u;x)+x\cdot i(G-N[u];x).
\]

\end{theorem}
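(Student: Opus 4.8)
The plan is to prove the identity by the standard device of partitioning the independent sets of $G$ according to whether they avoid or contain the distinguished vertex $u$ (I use $u$ throughout, matching the displayed formula). Since $i(G;x)=\sum_{k\ge 0}\Psi_k(G)x^{k}$ is simply the generating polynomial that records independent sets weighted by their size, it suffices to set up the claimed decomposition at the level of these set families and then read off the corresponding polynomials; equivalently, one is proving $\Psi_k(G)=\Psi_k(G-u)+\Psi_{k-1}(G-N[u])$ for every $k$.

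First I would let $\mathcal{I}_0$ be the family of independent sets of $G$ that do not contain $u$ and $\mathcal{I}_1$ the family of those that do; these are disjoint and together exhaust the independent sets of $G$. A subset of $V\setminus\{u\}$ is independent in $G$ exactly when it is independent in $G-u$, since adjacency is inherited by induced subgraphs, so $\sum_{S\in\mathcal{I}_0}x^{|S|}=i(G-u;x)$. For $\mathcal{I}_1$, the map $S\mapsto S\setminus\{u\}$ is a bijection onto the independent sets of $G-N[u]$: if $u\in S$ and $S$ is independent, then $S$ contains no neighbour of $u$, hence $S\setminus\{u\}\subseteq V\setminus N[u]$ and is independent there; conversely $T\mapsto T\cup\{u\}$ is the inverse map, because $T\subseteq V\setminus N[u]$ forces $u$ to have no neighbour in $T$, so $T\cup\{u\}$ is independent in $G$. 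This bijection lowers the size of a set by exactly one, whence $\sum_{S\in\mathcal{I}_1}x^{|S|}=x\cdot i(G-N[u];x)$. Adding the two contributions yields $i(G;x)=i(G-u;x)+x\cdot i(G-N[u];x)$.

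I do not expect any genuine obstacle here; the only points that merit a word are the degenerate cases, and they are harmless. If $u$ is isolated then $N[u]=\{u\}$, so $G-N[u]=G-u$, and the formula still holds (it just becomes $i(G;x)=(1+x)\,i(G-u;x)$). If $G-N[u]$ is the empty graph, then by the conventions $\Psi_0=1$ recorded in the paper we have $i(G-N[u];x)=1$, which is again consistent. Thus the argument applies verbatim to every graph $G$ and every choice of $u\in V(G)$.
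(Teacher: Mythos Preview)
Your argument is correct and is precisely the standard partition-by-membership proof of this identity. The paper itself does not prove Theorem~\ref{tm_I1}; it merely records it as a well-known property of independence polynomials, so there is nothing to compare against beyond noting that your proof is the expected one.
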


\begin{theorem}
\label{tm_I2}Let $G$ be graph consisting of components $G_{1},G_{2}%
,\ldots,G_{k}.$ Then
\[
i(G;x)=i(G_{1};x)\cdot i(G_{2};x)\cdot\ldots\cdot i(G_{k};x).
\]

\end{theorem}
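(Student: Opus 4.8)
The plan is to reduce to the case of two components by induction on $k$, and then to prove the two-component identity by exhibiting a bijection between independent sets of the disjoint union and pairs of independent sets of the two pieces, after which the claimed product will drop out of the Cauchy product formula for polynomials. The case $k=1$ is trivial, and if the statement holds for graphs with fewer than $k$ components I would write $G$ as the disjoint union of $G_1$ with the graph $G'$ whose components are $G_2,\ldots,G_k$; applying the two-component case and then the inductive hypothesis to $G'$ gives $i(G;x)=i(G_1;x)\cdot i(G';x)=i(G_1;x)\cdot i(G_2;x)\cdot\ldots\cdot i(G_k;x)$. So everything comes down to a disjoint union $G=G_1\cup G_2$ with no edge between the two parts.

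Here the key point I would isolate is that a set $S\subseteq V(G)$ is independent in $G$ if and only if $S\cap V(G_1)$ is independent in $G_1$ and $S\cap V(G_2)$ is independent in $G_2$: since every edge of $G$ lies within one of the two parts, $S$ spans no edge exactly when neither of its two restrictions does. Hence $S\mapsto(S\cap V(G_1),\,S\cap V(G_2))$ is a bijection from the size-$k$ independent sets of $G$ onto the pairs $(S_1,S_2)$ with $S_i$ independent in $G_i$ and $|S_1|+|S_2|=k$, which yields $\Psi_k(G)=\sum_{a+b=k}\Psi_a(G_1)\Psi_b(G_2)$.

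It then remains to recognise the right-hand side as a coefficient in a product of polynomials: multiplying the defining series gives $i(G_1;x)\cdot i(G_2;x)=\sum_a\sum_b\Psi_a(G_1)\Psi_b(G_2)x^{a+b}=\sum_k\bigl(\sum_{a+b=k}\Psi_a(G_1)\Psi_b(G_2)\bigr)x^k=\sum_k\Psi_k(G)x^k=i(G;x)$, the inner sums being finite because $\Psi_a(G_i)=0$ for $a>\alpha(G_i)$. This completes the two-component case and hence the general statement.

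Since every ingredient is elementary, I do not expect a genuine obstacle; the only place that calls for a bit of care is the bookkeeping in the Cauchy product, i.e. checking that the index ranges match and that no spurious terms of degree above $\alpha(G_1)+\alpha(G_2)$ survive, which is clear once one observes $\alpha(G)=\alpha(G_1)+\alpha(G_2)$. As an alternative one could instead peel off the vertices of $G_1$ one at a time using Theorem~\ref{tm_I1}, but the bijective argument above seems cleaner and avoids a second induction.
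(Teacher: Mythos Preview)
Your argument is correct and entirely standard. The paper, however, does not prove this statement at all: it is introduced with ``The following properties of independence polynomials are well known'' and stated without proof, so there is nothing to compare your approach against.
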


It is easily verified that for path $P_{n}$ and cycle $C_{n}$ holds
\begin{align*}
i(P_{n};x)  &  =\sum_{k=0}^{\left\lfloor (n+1)/2\right\rfloor }\binom
{n+1-k}{k}x^{k},\\
i(C_{n};x)  &  =\sum_{k=0}^{\left\lfloor n/2\right\rfloor }\frac{n}{n-k}%
\binom{n-k}{k}x^{k}.
\end{align*}

A \emph{cactus graph} is a connected graph in which each edge is contained in
at most one cycle, which means that each block of a cactus graph is an edge or
a cycle. A \emph{chain cactus} is cactus in which each block is a cycle which
contains at most two cut vertices and each cut vertex is contained in exactly
two cycles.\ The \emph{length} of chain cactus is number of cycles it consists
of. We say that a cycle is $h-$\emph{cycle} if it contains $h$ vertices. An
$h-$\emph{cactus} is a cactus graph in which every block is $h-$cycle. A
\emph{chain }$h-$\emph{cactus} is cactus which is $h-$cactus and chain cactus.

Let $A_{n}$ be a chain $h-$cactus. Cycles of $A_{n}$ are denoted with
$C^{(1)},\ldots,C^{(n)}$ by order on chain, vertices of cycle $C^{(i)}$ are
denoted with $v_{k}^{(i)}$ by order so that $v_{h}^{(i)}$ is cut vertex
contained in $C^{(i-1)}$ too. Vertices of $C^{(1)}$ are denoted by order so
that $v_{1}^{(1)}$ is cut vertex. Cut vertices therefore have two labels, but
that will not lead to confusion. (Note that completely analogous notation can
be introduced to general chain cacti. One only needs to introduce $h_{i}$ as
number of vertices on $C^{(i)}$).

Cycles $C^{(1)}$ and $C^{(n)}$ of $A_{n}$ are called \emph{end cycles},
otherwise cycles of $A_{n}$ are called \emph{internal cycles}. Note that
internal cycles contain exactly two cut vertices, while end cycles contain
exactly one cut vertex. We say that an internal cycle $C^{(j)}$ of $A_{n}$ is
in $k-$\emph{position} if its two cut vertices are on distance $k$, i.e. if
$v_{h}^{(j+1)}=v_{k}^{(j)}.$ For $1-$position and $2-$position we introduce
names \emph{ortho-} and \emph{meta- position}. If all internal cycles of chain
$h-$cactus are in ortho-position, then such cactus is called
\emph{ortho-chain} and is denoted with $O_{n}$. If all internal cycles of
chain $h-$cactus are in meta-position, then such cactus is called
\emph{meta-chain} and is denoted with $M_{n}$.

This notation in chain $h-$cacti (applied on $O_{n}$ and $M_{n}$) is
illustrated with Figure \ref{figure1} and Figure \ref{figure2}.%

\begin{figure}[h]%
\centering
\includegraphics[
natheight=2.392900in,
natwidth=5.020200in,
height=1.4633in,
width=3.0398in
]%
{Figure1.jpg}%
\caption{Notation in orto-chain.}%
\label{figure1}%
\end{figure}
%

\begin{figure}[h]%
\centering
\includegraphics[
natheight=2.320300in,
natwidth=4.753900in,
height=1.42in,
width=2.879in
]%
{Figure2.jpg}%
\caption{Notation in meta-chain.}%
\label{figure2}%
\end{figure}

With $A_{n-j}$ we will denote subcactus of $A_{n}$ induced by cycles
$C^{(1)},\ldots,C^{(n-j)}$. Graph with only one vertex is considered to be
chain cactus of length $0$ and is denoted with $A_{0}.$ Note that there is
only one $A_{0},$ $A_{1},$ $A_{2}$ (these cacti are considered to be both
ortho- and meta-).

\section{Main results}

For the independence polynomials of short chains holds:
\begin{align*}
i(O_{0})  &  =i(A_{0})=i(M_{0})=1+x,\\
i(O_{1})  &  =i(A_{1})=i(M_{1})=i(C_{h})=x\cdot i(P_{h-3})+i(P_{h-1}),\\
i(O_{2})  &  =i(A_{2})=i(M_{2})=x\cdot i(P_{h-3})^{2}+i(P_{h-1})^{2}.
\end{align*}
These results follow easily from Theorems \ref{tm_I1} and \ref{tm_I2}. Now we
want to provide formulae for longer chains, specifically for longer
ortho-chains and meta-chains.

\bigskip

\noindent\textit{Ortho-chains\medskip}

The recurrence relation for independence polynomials of longer ortho-chains is
given by the following theorem.

\begin{theorem}
\label{tm_O1}The independence polynomials of $O_{n}$, for $n\geq3$, satisfy
\[
i(O_{n})=x\cdot i(P_{h-3})^{2}\cdot i(O_{n-2})+i(P_{h-2})\cdot i(O_{n-1}).
\]

\end{theorem}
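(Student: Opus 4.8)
The plan is to apply the deletion identity of Theorem~\ref{tm_I1} at a carefully chosen vertex of $O_n$ and then unwind the resulting graphs using Theorem~\ref{tm_I2}. Concretely, I would take $u=v_1^{(1)}$, the unique cut vertex of the first end cycle $C^{(1)}$, or equivalently work from the far end at the cut vertex joining $C^{(n-1)}$ and $C^{(n)}$; either choice works, and I will describe the near-end version. Applying Theorem~\ref{tm_I1} gives
\[
i(O_n)=i(O_n-u)+x\cdot i(O_n-N[u]).
\]
The point is to recognize the two graphs on the right. Deleting $u$ from $C^{(1)}$ leaves a path $P_{h-1}$ that is attached to the rest of the chain $O_{n-1}$ at exactly one vertex (the other cut vertex of $C^{(1)}$), but this shared vertex means $O_n-u$ is \emph{not} simply a disjoint union; I must instead peel off $C^{(1)}$ more carefully.

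So the key move is a second application of Theorem~\ref{tm_I1}, this time at the cut vertex $w=v_1^{(1)}$ viewed inside $C^{(1)}$ together with a companion expansion at the neighbouring cut vertex $v_h^{(2)}$, which in the ortho-chain coincides in position with the relevant vertex of $C^{(1)}$. The cleanest route: let $z$ be the cut vertex shared by $C^{(1)}$ and $C^{(2)}$. Expanding $i(O_n)$ at $z$ via Theorem~\ref{tm_I1}, the term $i(O_n-z)$ splits (Theorem~\ref{tm_I2}) into $i(P_{h-1})$ coming from $C^{(1)}-z$ times the independence polynomial of what remains of $C^{(2)},\ldots,C^{(n)}$ after removing $z$; and the term $x\cdot i(O_n-N[z])$ splits as $i(P_{h-3})$ (from $C^{(1)}$ minus the closed neighbourhood of $z$ within it) times the polynomial of $C^{(2)},\ldots,C^{(n)}$ minus $N[z]$. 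In the ortho configuration, "$C^{(2)},\ldots,C^{(n)}$ minus $z$" is itself $O_{n-1}$ with its first cut vertex deleted, and "minus $N[z]$" removes in addition the two neighbours of $z$ on $C^{(2)}$, one of which is the next cut vertex's neighbour — this is where the ortho-position hypothesis (cut vertices at distance $1$) is essential, because it controls exactly which vertices of $C^{(2)}$ get deleted and lets the leftover reassemble into $P_{h-3}$ and a copy of $O_{n-2}$.

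Carrying this bookkeeping through, I expect both resulting polynomials to re-expand, using the short-chain formulas $i(O_1)=x\,i(P_{h-3})+i(P_{h-1})$ and $i(O_2)=x\,i(P_{h-3})^2+i(P_{h-1})^2$ already recorded in the excerpt, and the path identity $i(P_{h-1})=i(P_{h-2})+x\,i(P_{h-3})$ (an instance of Theorem~\ref{tm_I1} on $P_{h-1}$), so that after collecting terms the coefficient of $i(O_{n-2})$ becomes $x\,i(P_{h-3})^2$ and the coefficient of $i(O_{n-1})$ becomes $i(P_{h-2})$. An equivalent and perhaps tidier presentation is a double induction: establish the claimed recurrence for the first few $n$ directly from the $i(O_0),i(O_1),i(O_2)$ values, then for general $n\geq 3$ expand at the \emph{last} cut vertex so that $O_{n-1}$ and $O_{n-2}$ appear literally as induced subcacti, apply Theorem~\ref{tm_I2} to the disconnected pieces, and match coefficients.

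The main obstacle is purely combinatorial-geometric rather than algebraic: correctly identifying, in the ortho-chain, which vertices of the adjacent cycle $C^{(2)}$ survive after deleting a closed neighbourhood, and verifying that the surviving graph is exactly a disjoint union of a path $P_{h-3}$ with the subcactus $O_{n-2}$ (and not, say, $P_{h-3}$ with $O_{n-2}$ sharing a vertex, or with an extra isolated vertex). This hinges entirely on the distance-$1$ placement of consecutive cut vertices; the analogous computation for a general $k$-position chain would produce $P_{h-k-1}$ and a different attachment, which is precisely why ortho- and meta-chains get clean two-term recurrences. Once that identification is nailed down, the rest is a one-line substitution using $i(P_{h-1})=i(P_{h-2})+x\,i(P_{h-3})$ to consolidate, together with a check of the base cases $n=3,4$ against the formulas for $i(O_0),i(O_1),i(O_2)$.
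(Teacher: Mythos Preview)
Your expansion at the cut vertex $z$ shared by $C^{(n-1)}$ and $C^{(n)}$ is exactly right and reproduces the paper's intermediate identity
\[
i(O_{n})=x\,i(P_{h-3})^{2}\,i\bigl(O_{n-2}-v_{1}^{(n-2)}\bigr)+i(P_{h-1})\,i\bigl(O_{n-1}-v_{1}^{(n-1)}\bigr),
\]
because removing $N[z]$ deletes three vertices from each of $C^{(n)}$ and $C^{(n-1)}$, and the ortho hypothesis makes one of those deleted vertices coincide with $v_{1}^{(n-2)}$. So far so good.

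The gap is in the next step. The graphs you obtain are $O_{n-1}-v_{1}^{(n-1)}$ and $O_{n-2}-v_{1}^{(n-2)}$, \emph{not} $O_{n-1}$ and $O_{n-2}$; your remark that ``$O_{n-1}$ and $O_{n-2}$ appear literally as induced subcacti'' after expanding at the last cut vertex is simply not true, and no amount of ``collecting terms'' using $i(P_{h-1})=i(P_{h-2})+x\,i(P_{h-3})$ converts the vertex-deleted polynomials into the undeleted ones. If you try to substitute and match coefficients you will find you need a relation among the $i(O_{k}-v_{1}^{(k)})$ themselves, which you never establish.

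The paper supplies exactly that missing ingredient. Writing $a_{k}=i(O_{k}-v_{1}^{(k)})$, a second application of Theorems~\ref{tm_I1} and~\ref{tm_I2} (now to $O_{n}-v_{1}^{(n)}$ at the same cut vertex) yields the auxiliary recurrence
\[
a_{n}=x\,i(P_{h-3})^{2}\,a_{n-2}+i(P_{h-2})\,a_{n-1}.
\]
The first display then reads $i(O_{n})=c_{1}a_{n-2}+c_{2}a_{n-1}$ with coefficients $c_{1},c_{2}$ independent of $n$. Any sequence that is a fixed linear combination of shifts of $(a_{k})$ satisfies the same linear recurrence as $(a_{k})$; hence $i(O_{n})$ satisfies the displayed recurrence with coefficients $x\,i(P_{h-3})^{2}$ and $i(P_{h-2})$, which is the theorem. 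This ``transfer'' step is the idea your proposal is missing.
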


\begin{proof}
For the independence polynomial of $O_{n}$ ($n\geq3$) holds
\begin{equation}
i(O_{n})=x\cdot i(P_{h-3})^{2}\cdot i(O_{n-2}-v_{1}^{(n-2)})+i(P_{h-1})\cdot
i(O_{n-1}-v_{1}^{(n-1)}). \label{For_O}%
\end{equation}
Similarly, for the independence polynomial of $O_{n}-v_{1}^{(n)}$ holds
\begin{equation}
i(O_{n}-v_{1}^{(n)})=x\cdot i(P_{h-3})^{2}\cdot i(O_{n-2}-v_{1}^{(n-2)}%
)+i(P_{h-2})\cdot i(O_{n-1}-v_{1}^{(n-1)}). \label{For_O1}%
\end{equation}
Now, since right-hand size of (\ref{For_O}) is a linear combination of
expressions satisfying (\ref{For_O1}), that means that it also satisfies
(\ref{For_O1}). Since right-hand side and left-hand side of (\ref{For_O}) are
equal, that implies that $i(O_{n})$ satisfies (\ref{For_O1}) too, and that
proves the theorem.
\end{proof}

\bigskip

By setting $x=1$ to the recurrence relation from Theorem \ref{tm_O1} we can
obtain the recurrence relation for $\Psi(O_{n})$ in which coefficients would
be total number of independent sets in different paths. To be precise, we
obtain
\[
\Psi(O_{n})=\Psi(P_{h-3})^{2}\cdot\Psi(O_{n-2})+\Psi(P_{h-2})\cdot\Psi
(O_{n-1}).
\]
Given that
\[
\Psi(P_{n})=\frac{3F_{n}+L_{n}}{2}%
\]
where $F_{n}$ is fibonacci and $L_{n}$ lucas number, we obtain
\[
\Psi(O_{n})=\left(  \frac{3F_{h-3}+L_{h-3}}{2}\right)  ^{2}\cdot\Psi
(O_{n-2})+\frac{3F_{h-2}+L_{h-2}}{2}\cdot\Psi(O_{n-1}).
\]
Therefore, for a specific $n$ and $h$ we could calculate exact $\Psi(O_{n})$
from that recurrence relation. Now, we proceed to maximum independent set. We
will establish size and number of such sets for ortho-chains, i.e.
independence number $\alpha(O_{n})$ and number of maximum independent sets
$\Psi_{\alpha(O_{n})}\left(  O_{n}\right)  $.

\begin{theorem}
\label{tm_O2}The independence number of $O_{n}$, for $n\geq1,$ is
\[
\alpha(O_{n})=\left\{
\begin{tabular}
[c]{ll}%
$\frac{nh}{2}-\left\lfloor \frac{n-1}{2}\right\rfloor ,$ & for $h$ even,\\
$\frac{n\left(  h-1\right)  }{2},$ & for $h$ odd.
\end{tabular}
\right.
\]

\end{theorem}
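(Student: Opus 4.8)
The plan is to prove the formula for $\alpha(O_n)$ by induction on $n$, exploiting the recurrence structure that underlies Theorem~\ref{tm_O1}. The independence number is the degree of the independence polynomial, so in principle one could read it off from Theorem~\ref{tm_O1}; but it is cleaner to argue combinatorially. First I would settle the base cases $n=1$ and $n=2$ directly from the known values $i(O_1)=i(C_h)$ and $i(O_2)$, checking that the formula gives $\alpha(C_h)=\lfloor h/2\rfloor$, which indeed equals $h/2$ for $h$ even and $(h-1)/2$ for $h$ odd, and similarly for $O_2$. I would also record the auxiliary quantity $\beta_n:=\alpha(O_n-v_1^{(n)})$, the maximum size of an independent set that is forbidden to use the free end cut vertex, since the recurrences in~(\ref{For_O}) and~(\ref{For_O1}) naturally couple $\alpha(O_n)$ with these restricted numbers.

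Next I would set up the two-term recursion. Peeling off the last cycle $C^{(n)}$ at its cut vertex $v_h^{(n)}=v_1^{(n-1)}$ and using Theorem~\ref{tm_I1} together with the path/cycle independence numbers $\alpha(P_m)=\lceil m/2\rceil$, one obtains, from the degree of~(\ref{For_O}),
\[
\alpha(O_n)=\max\Bigl\{\,1+2\Bigl\lceil\tfrac{h-3}{2}\Bigr\rceil+\alpha(O_{n-2}-v_1^{(n-2)}),\ \Bigl\lceil\tfrac{h-1}{2}\Bigr\rceil+\alpha(O_{n-1}-v_1^{(n-1)})\,\Bigr\},
\]
and from~(\ref{For_O1}) the same identity with $\lceil(h-1)/2\rceil$ in the second branch replaced by $\lceil(h-2)/2\rceil$ for the restricted version $\beta_n$. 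The key step is to show that these max's are always attained by a predictable branch: I expect that for $h$ even the first (ortho) branch dominates, while for $h$ odd both branches tie, and that $\beta_n$ differs from $\alpha(O_n)$ by a controlled amount (either $0$ or $1$ depending on the parity of $h$ and $n$). Carrying the induction hypothesis for both $\alpha$ and $\beta$ simultaneously, one substitutes the claimed closed forms into the right-hand side, simplifies the floor expressions $\lfloor(n-1)/2\rfloor$ via the split into $n$ even versus $n$ odd, and verifies equality.

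The main obstacle will be the bookkeeping of the interlocking parities: the formula itself has a case split on the parity of $h$, the floor $\lfloor(n-1)/2\rfloor$ introduces a case split on the parity of $n$, and the restricted invariant $\beta_n$ introduces yet another dependence, so the induction step fans out into several sub-cases that must each be checked to collapse to the stated expression. I would organize this by treating $h$ even and $h$ odd as two separate lemmas-within-the-proof. In the $h$ odd case things are comparatively easy because every cycle contributes exactly $(h-1)/2$ to a maximum independent set with no interaction penalty at cut vertices, so $\alpha(O_n)=n(h-1)/2$ follows almost immediately and the $\beta$-invariant is barely needed. In the $h$ even case the $-\lfloor(n-1)/2\rfloor$ term records the unavoidable loss incurred roughly once every two consecutive cycles because an ortho cut vertex pair lies at distance $1$; here the induction on the pair $(\alpha(O_n),\beta_n)$ does the real work, and the verification that $1+2\lceil(h-3)/2\rceil=h-1$ combined with the inductive value of $\alpha(O_{n-2})$ reproduces $\tfrac{nh}{2}-\lfloor\tfrac{n-1}{2}\rfloor$ is the computational heart of the argument.
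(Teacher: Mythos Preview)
Your approach is correct and follows the same inductive strategy as the paper, but you are making the argument harder than it needs to be. You go back to the raw decompositions~(\ref{For_O}) and~(\ref{For_O1}), which forces you to carry the auxiliary quantity $\beta_n=\alpha(O_n-v_1^{(n)})$ alongside $\alpha(O_n)$ and to manage an interlocking double induction. The paper instead reads the degree recurrence directly off Theorem~\ref{tm_O1}, which has already eliminated the restricted graphs: since all polynomials involved have nonnegative coefficients, taking degrees of both sides of $i(O_n)=x\cdot i(P_{h-3})^2\cdot i(O_{n-2})+i(P_{h-2})\cdot i(O_{n-1})$ gives immediately
\[
\alpha_n=\max\Bigl\{\,1+2\deg i(P_{h-3})+\alpha_{n-2},\ \deg i(P_{h-2})+\alpha_{n-1}\,\Bigr\},
\]
which for even $h$ is $\alpha_n=\max\{h-1+\alpha_{n-2},\ \tfrac{h-2}{2}+\alpha_{n-1}\}$. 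With the base cases $\alpha_0=1$, $\alpha_1=h/2$, $\alpha_2=h$, a straight single-variable induction then verifies the closed form, and the odd-$h$ case is analogous. So your $\beta_n$ machinery, while not wrong, is unnecessary once Theorem~\ref{tm_O1} is in hand.
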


\begin{proof}
For the sake of notation simplicity, let us denote $\alpha_{n}=\alpha(O_{n})$
and $p_{n}=\deg(i(P_{n})).$ In the case of even $h$, from independence
polynomials we obtain
\begin{align*}
\alpha_{0}  &  =1,\\
\alpha_{1}  &  =\max\left\{  1+p_{h-3},p_{h-1}\right\}  =\frac{h}{2},\\
\alpha_{2}  &  =\max\left\{  1+2\cdot p_{h-3},2\cdot p_{h-1}\right\}  =h.
\end{align*}
From recurrence relation of Theorem \ref{tm_O1} we obtain
\[
\alpha_{n}=\max\left\{  h-1+\alpha_{n-2},\text{ \ \ }\frac{h-2}{2}%
+\alpha_{n-1}\right\}  .
\]
The proof is now by induction on $n$. The proof for odd $h$ is analogous.
\end{proof}

\begin{theorem}
\label{tm_O3}The number of maximum independent sets in $O_{n}$, for $n\geq2$,
is
\[
\Psi_{\alpha(O_{n})}(O_{n})=\left\{
\begin{tabular}
[c]{ll}%
$1,$ & for $h$ even and $n=2k,$\\
$2+\frac{kh}{2},$ & for $h$ even and $n=2k+1,$\\
$\left(  \frac{h+1}{2}\right)  ^{2},$ & for $h$ odd.
\end{tabular}
\right.
\]

\end{theorem}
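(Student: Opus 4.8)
The plan is to mimic the structure already used for $\alpha(O_n)$: track the number of maximum independent sets through the recurrence of Theorem \ref{tm_O1}, but now keeping simultaneously the information about $O_n$ and about $O_n - v_1^{(n)}$, since both appear in equations (\ref{For_O}) and (\ref{For_O1}). Concretely, I would introduce two bookkeeping quantities: $a_n := \alpha(O_n)$ together with $b_n := \alpha(O_n - v_1^{(n)})$, and the corresponding leading-coefficient counts $\Psi_{a_n}(O_n)$ and $\Psi_{b_n}(O_n - v_1^{(n)})$. From equations (\ref{For_O}) and (\ref{For_O1}) one reads off that each of these four sequences satisfies a ``max-plus'' recurrence whose coefficient count is obtained by summing the counts of whichever term(s) on the right attain the maximum degree. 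So the first step is to determine, for each parity of $h$ and each residue of $n$ mod $2$, which of the two terms $x\cdot i(P_{h-3})^2 i(O_{n-2})$ and $i(P_{h-1})\, i(O_{n-1})$ (resp. with $i(P_{h-2})$ in the second equation) dominates, or whether they tie; this is exactly the case analysis already implicit in the proof of Theorem \ref{tm_O2}.

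Next I would compute the base cases explicitly. From the displayed formulas for $i(O_0), i(O_1), i(O_2)$, together with $i(O_n - v_1^{(n)})$ (which one gets from Theorem \ref{tm_I1} applied at $v_1^{(n)}$, or directly: deleting $v_1^{(n)}$ from $O_n$ leaves $O_{n-1}-v_1^{(n-1)}$ joined to a path $P_{h-1}$), I would nail down $\Psi_{\alpha}(O_1)$, $\Psi_{\alpha}(O_2)$, and the analogous quantities for the punctured chains. For $h$ even these should come out to small constants; for $h$ odd, since $i(P_{h-3})$ and $i(P_{h-1})$ have related degrees, one expects a tie producing the $\left(\frac{h+1}{2}\right)$-type factors — note $\Psi_1(P_{h-1}) = \lfloor (h-1)/2\rfloor + \text{something}$, and more relevantly the leading coefficient of $i(P_{h-3})$ vs. that of $i(P_{h-1})$ must be compared. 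Once base cases and the domination pattern are fixed, the recurrences for the count sequences become ordinary linear (or piecewise-constant-plus-linear) recurrences, and the closed forms $1$, $2 + \frac{kh}{2}$, $\left(\frac{h+1}{2}\right)^2$ are verified by a routine induction on $n$ (splitting into $n=2k$ and $n=2k+1$ in the even-$h$ case).

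The main obstacle is the bookkeeping when the two terms on the right-hand side of (\ref{For_O}) have equal degree: then $\Psi_{\alpha(O_n)}(O_n)$ is the sum of two contributions, one of which itself carries an $n$-dependent count, and this is precisely the mechanism that produces the linear growth $2 + \frac{kh}{2}$ for odd $n$ when $h$ is even. Getting the tie conditions exactly right — in particular checking whether the $+1$ from the factor $x$ in $x\cdot i(P_{h-3})^2$ makes the first term strictly dominate, tie, or fall short against $i(P_{h-2})$ or $i(P_{h-1})$ for each parity — requires care, because an off-by-one error here changes which closed form appears. I would therefore isolate a short lemma recording, for $h$ even and $h$ odd separately, the value of $\deg i(P_{h-1}) - \deg i(P_{h-3})$ and the leading coefficients of $i(P_{h-2})$, $i(P_{h-1})$, and handle the even-$h$, odd-$n$ case (the only genuinely non-constant one) as the centerpiece of the induction, with the remaining cases being straightforward.
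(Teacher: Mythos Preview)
Your plan is correct and follows the same idea as the paper---read off the leading coefficient of $i(O_n)$ by comparing the degrees of the two summands in a recurrence---but you are making it harder than necessary. The paper works directly with the recurrence of Theorem~\ref{tm_O1}, which already expresses $i(O_n)$ purely in terms of $i(O_{n-1})$ and $i(O_{n-2})$; there is no need to go back to (\ref{For_O})--(\ref{For_O1}) or to track the auxiliary sequences $b_n$ and $\Psi_{b_n}(O_n-v_1^{(n)})$ for the punctured chains. Using Theorem~\ref{tm_O2} one simply compares $\deg\bigl(x\cdot i(P_{h-3})^2\cdot i(O_{n-2})\bigr)$ with $\deg\bigl(i(P_{h-2})\cdot i(O_{n-1})\bigr)$: for $h$ even and $n=2k$ the first summand strictly dominates, giving $\Psi_{\alpha(O_{2k})}=\Psi_{\alpha(O_{2k-2})}$; for $h$ even and $n=2k+1$ the two summands tie, giving $\Psi_{\alpha(O_{2k+1})}=\Psi_{\alpha(O_{2k-1})}+\tfrac{h}{2}$ (the $\tfrac{h}{2}$ being the leading coefficient of $i(P_{h-2})$); for $h$ odd the second summand strictly dominates with leading coefficient $1$, giving $\Psi_{\alpha(O_n)}=\Psi_{\alpha(O_{n-1})}$. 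Your route through the punctured chains would reach the same conclusions, just with roughly twice the bookkeeping.
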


\begin{proof}
Let us first prove the result for even $h.$ For the degree of independence
polynomials holds
\[
\deg(i(P_{h-3}))=\deg(i(P_{h-2}))=\frac{h-2}{2},
\]
with leading coefficient of $i(P_{h-3})$ being $1.$ First, we want to
establish $\Psi_{\alpha(O_{2k})}(O_{2k})$ which is a leading coefficient in
$i(O_{2k})$. Let us recall that by Theorem \ref{tm_O1} polynomial $i(O_{2k})$
satisfies
\[
i(O_{2k})=x\cdot i(P_{h-3})^{2}\cdot i(O_{2k-2})+i(P_{h-2})\cdot i(O_{2k-1}).
\]
We know by Theorem \ref{tm_O2} that
\[
\deg(i(O_{2k}))=kh-k+1
\]
It is easily verified that
\begin{align*}
\deg\left(  x\cdot i(P_{h-3})^{2}\cdot i(O_{2k-2})\right)   &  =hk-k+1,\\
\deg\left(  i(P_{h-2})\cdot i(O_{2k-1})\right)   &  =kh-k
\end{align*}
These degrees imply $\Psi_{\alpha(O_{2k})}(O_{2k})=\Psi_{\alpha(O_{2k-2}%
)}(O_{2k-2})$ (since leading coefficient in $i(P_{h-3})$ equals $1$). Now,
from $\Psi_{\alpha(O_{2})}(O_{2})=1$ follows $\Psi_{\alpha(O_{2k})}%
(O_{2k})=1.$

Similar reasoning yields
\[
\Psi_{\alpha(O_{2k+1})}(O_{2k+1})=\Psi_{\alpha(O_{2k-1})}(O_{2k-1})+\frac
{1}{2}h
\]
which together with $\Psi_{\alpha(O_{1})}(O_{1})=2$ implies the result.

In the case of odd $h$ we obtain
\[
\Psi_{\alpha(O_{n})}(O_{n})=\Psi_{\alpha(O_{n-1})}(O_{n-1}),
\]
and then the result follows from $\Psi_{\alpha(O_{2})}(O_{2})=\left(
\frac{h+1}{2}\right)  ^{2}.$
\end{proof}

\bigskip

\noindent\textit{Meta-chains\medskip}

\noindent By similar reasoning one can obtain analogous results for meta-chains.

\begin{theorem}
\label{tm_M1}The independence polynomials of $M_{n}$, for $n\geq3$, satisfy%
\[
i(M_{n})=\alpha\cdot i(M_{n-1})-x^{2}\cdot\beta\cdot i(M_{n-2})
\]
where%
\begin{align*}
\alpha &  =x^{2}\cdot i(P_{h-5})+x\cdot\left(  i(P_{h-4})+2\cdot
i(P_{h-5})\right)  +i(P_{h-4}),\\
\beta &  =x\cdot i(P_{h-5})^{2}+i(P_{h-5})^{2}+i\left(  P_{h-4}\right)  \cdot
i(P_{h-5})-i\left(  P_{h-4}\right)  \cdot i\left(  P_{h-6}\right)  .
\end{align*}

\end{theorem}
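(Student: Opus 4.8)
The plan is to mimic the proof of Theorem~\ref{tm_O1}: express $i(M_n)$ and the independence polynomials of certain truncated/vertex-deleted subchains via Theorems~\ref{tm_I1} and~\ref{tm_I2}, and then eliminate the auxiliary quantities to get a clean recurrence in the $i(M_j)$ alone. The essential difference from the ortho case is that deleting the cut vertex $v_1^{(n)}$ of a meta-chain does not split $C^{(n-1)}$ into a single path; because the two cut vertices of an internal cycle sit at distance $2$, one must track \emph{two} families of polynomials, say $a_n:=i(M_n)$ and $b_n:=i(M_n-v_1^{(n)})$, and possibly a third obtained by also deleting a neighbour. Concretely, I would first apply Theorem~\ref{tm_I1} at the vertex $v_1^{(n)}$ of the last cycle $C^{(n)}$: the term $i(M_n-v_1^{(n)})$ leaves $C^{(n)}$ as a path $P_{h-1}$ attached at a vertex of $M_{n-1}$, while $i(M_n-N[v_1^{(n)}])$ leaves a path $P_{h-3}$ together with $M_{n-1}$ minus a closed neighbourhood of one of its vertices. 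Expanding the latter again with Theorem~\ref{tm_I1} at the appropriate vertex of $C^{(n-1)}$ (the one at distance $2$ from the cut vertex) produces the coefficients $i(P_{h-4}), i(P_{h-5}), i(P_{h-6})$ that appear in $\alpha$ and $\beta$.

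After writing down the system, the second step is purely linear algebra: I have relations of the shape
\begin{align*}
a_n &= A\,a_{n-1}+B\,b_{n-1}+C\,c_{n-1},\\
b_n &= A'\,a_{n-1}+B'\,b_{n-1}+C'\,c_{n-1},
\end{align*}
(with $c$ the third auxiliary polynomial and $A,\dots,C'$ explicit polynomials in $x$ coming from paths), together with similar relations for $c_n$. Eliminating $b_n$ and $c_n$ — equivalently, using the Cayley--Hamilton relation of the $3\times 3$ (or $2\times 2$, if $c$ turns out to be redundant) transfer matrix — yields a second-order recurrence $a_n=\alpha\,a_{n-1}-x^2\beta\,a_{n-2}$. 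I would then check that the coefficient of $a_{n-1}$ is the trace-type expression $\alpha$ and that the coefficient of $a_{n-2}$ is $x^2$ times the determinant-type expression $\beta$ claimed in the statement, simplifying the path-polynomial combinations using the path identities $i(P_m)=i(P_{m-1})+x\,i(P_{m-2})$ wherever that collapses a term.

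Alternatively, and perhaps more in the spirit of the ortho proof, I would avoid matrices: establish the two "parallel" identities
\[
i(M_n)=\alpha\cdot i(M_{n-1})-x^2\beta\cdot i(M_{n-2}),\qquad
i(M_n-v_1^{(n)})=\alpha\cdot i(M_{n-1}-v_1^{(n-1)})-x^2\beta\cdot i(M_{n-2}-v_1^{(n-2)})
\]
directly by Theorem~\ref{tm_I1}, observe that the right-hand side of the first is a fixed linear combination of quantities obeying the second, and conclude as in Theorem~\ref{tm_O1}. The main obstacle I anticipate is purely bookkeeping: correctly identifying which vertex of $C^{(n-1)}$ to expand at so that the residual graph is a disjoint union of $M_{n-2}$ (or $M_{n-2}-v_1^{(n-2)}$) with short paths, and then carrying the several path-polynomial terms through the elimination without sign or index errors — in particular verifying that the $-i(P_{h-4})i(P_{h-6})$ term in $\beta$ really survives rather than cancelling. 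Once the correct local expansions are written down, the algebra is routine; there is no conceptual difficulty beyond the meta-position geometry forcing distance-$2$ deletions.
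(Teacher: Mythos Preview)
Your plan is essentially the paper's: introduce auxiliary vertex-deleted polynomials, write coupled first-order recurrences via Theorems~\ref{tm_I1} and~\ref{tm_I2}, and eliminate to obtain a second-order relation in $i(M_n)$ alone. The paper expands at $v_2^{(n)}$ rather than your $v_1^{(n)}$, tracking $H_n=M_n-v_2^{(n)}$ and $K_n=M_n-N[v_2^{(n)}]$; this is the natural pivot because $v_2^{(n)}$ is the meta attachment point for the next cycle, so $i(H_n)$ and $i(K_n)$ immediately become linear combinations of $i(M_{n-1})$ and $i(H_{n-1})$, and one back-substitution of $i(H_{n-2})$ finishes the elimination. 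One slip in your sketch: $M_n-N[v_1^{(n)}]$ equals $P_{h-3}$ together with $M_{n-1}$ minus the \emph{single} vertex $v_h^{(n)}=v_2^{(n-1)}$ (that is, $H_{n-1}$), not minus a closed neighbourhood---so after correcting this, your expansion at $v_1^{(n)}$ funnels you straight into the paper's auxiliaries anyway, just with one extra intermediate step.
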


\begin{proof}
For the sake of notation simplicity let
\begin{align*}
H_{n}  &  =M_{n}-v_{2}^{(n)},\\
K_{n}  &  =M_{n}-\left\{  v_{1}^{(n)},v_{2}^{(n)},v_{3}^{(n)}\right\}  ,\\
p_{n}  &  =i(P_{n}).
\end{align*}
Therefore, we have
\begin{align*}
i(M_{n})  &  =i(H_{n})+x\cdot i(K_{n}),\\
i(H_{n})  &  =i(M_{n-1})\cdot p_{h-4}+x\cdot i(H_{n-1})\cdot\left(
p_{h-4}+p_{h-5}+p_{h-5}\cdot x\right)  ,\\
i(K_{n})  &  =i(M_{n-1})\cdot p_{h-5}+x\cdot i(H_{n-1})\cdot p_{h-6}.
\end{align*}
Substituting $i(K_{n-1})$ and $i(H_{n-1})$ to $i(M_{n})$ we obtain
\begin{equation}
i(M_{n})=i(M_{n-1})\cdot\left(  p_{h-4}+x\cdot p_{h-5}\right)  +i(H_{n-1}%
)\cdot\left(  x\cdot\left(  p_{h-4}+p_{h-5}+p_{h-5}\cdot x\right)  +x^{2}\cdot
p_{h-6}\right)  \label{For_M1}%
\end{equation}
Substituting $i(H_{n-1})$ to obtained expression gives
\begin{align*}
i(M_{n})  &  =i(M_{n-1})\cdot\left(  p_{h-4}+x\cdot p_{h-5}\right)  +\\
&  +i(M_{n-2})\cdot p_{h-4}\cdot\left(  x\cdot\left(  p_{h-4}+p_{h-5}%
+p_{h-5}\cdot x\right)  +x^{2}\cdot p_{h-6}\right)  +\\
&  +x\cdot i(H_{n-2})\cdot\left(  p_{h-4}+p_{h-5}+p_{h-5}\cdot x\right)
\cdot\left(  x\cdot\left(  p_{h-4}+p_{h-5}+p_{h-5}\cdot x\right)  +x^{2}\cdot
p_{h-6}\right)  .
\end{align*}
Substituting $i(H_{n-2})$ from (\ref{For_M1}) to this expression proves the
claim of the theorem.
\end{proof}

\bigskip

By setting $x=1$ to the recurrence relation from Theorem \ref{tm_M1} we can
obtain the recurrence relation for $\Psi(M_{n})$ in which coefficients would
be total number of independent sets in different paths. Therefore, for a
specific $n$ and $h$ we could calculate exact $\Psi(M_{n})$ from that
recurrence relation. Now, we proceed to maximum independent set. We will
establish size and number of such sets for meta-chains, i.e. independence
number $\alpha(M_{n})$ and number of maximum independent sets $\Psi
_{\alpha(M_{n})}\left(  M_{n}\right)  $.

\begin{theorem}
\label{tm_M2}The independence number of $M_{n}$, for $n\geq1,$ is
\[
\alpha(M_{n})=n\cdot\left\lfloor \frac{h}{2}\right\rfloor .
\]

\end{theorem}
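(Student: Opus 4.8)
The plan is to establish the two inequalities $\alpha(M_n)\le n\lfloor h/2\rfloor$ and $\alpha(M_n)\ge n\lfloor h/2\rfloor$ separately. The first holds for an arbitrary chain $h$-cactus; only the second uses that the chain is a meta-chain.

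For the upper bound, let $S$ be an independent set of a chain $h$-cactus $A_n$ with cycles $C^{(1)},\dots,C^{(n)}$, and let $c$ be the number of cut vertices of $A_n$ that lie in $S$. Counting vertex--cycle incidences, and using that every non-cut vertex lies on exactly one cycle while every cut vertex lies on exactly two, I would get
\[
\sum_{i=1}^{n}\bigl|S\cap V(C^{(i)})\bigr| = |S| + c .
\]
Each set $S\cap V(C^{(i)})$ is independent in the cycle $C^{(i)}\cong C_h$, so each term on the left is at most $\alpha(C_h)=\lfloor h/2\rfloor$; hence $|S|\le n\lfloor h/2\rfloor - c\le n\lfloor h/2\rfloor$, which gives $\alpha(M_n)\le n\lfloor h/2\rfloor$.

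For the lower bound I would exhibit an independent set of $M_n$ of size exactly $n\lfloor h/2\rfloor$. The elementary fact I would isolate first is that $C_h$ admits a maximum independent set (of size $\lfloor h/2\rfloor$) that avoids any prescribed single vertex, and also one that avoids any prescribed pair of vertices at distance $2$: deleting such a pair from $C_h$ leaves $K_1\cup P_{h-3}$, with independence number $1+\lceil (h-3)/2\rceil=\lfloor h/2\rfloor$. In $M_n$ each of the two end cycles carries a single cut vertex, while each internal cycle carries exactly two cut vertices which, by the defining property of meta-position, are at distance $2$. So for every $i$ I can pick a maximum independent set $S_i$ of $C^{(i)}$ avoiding all cut vertices on $C^{(i)}$, so that $|S_i|=\lfloor h/2\rfloor$. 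Because no $S_i$ contains a cut vertex, the sets $S_1,\dots,S_n$ are pairwise disjoint; and because the only edges of $M_n$ between distinct cycles are incident to shared cut vertices, which do not belong to $S:=\bigcup_i S_i$, the set $S$ is independent in $M_n$. Thus $\alpha(M_n)\ge |S| = n\lfloor h/2\rfloor$, and together with the upper bound the theorem follows.

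This argument involves no hard step: the computations are routine and the only mildly delicate points are the incidence identity above and the distance-$2$ observation about $C_h$ --- the latter being exactly where meta-position enters, and where ortho-chains (with adjacent cut vertices on each internal cycle) would behave differently for even $h$. One small bookkeeping nuisance is that, under the labelling fixed in the Preliminaries, the cut vertex of the first cycle sits at position $1$ rather than $2$, so $S_1$ must be chosen to match (for instance, as the even-indexed vertices of $C^{(1)}$). As an alternative one could instead read off $\deg i(M_n)$ from the recurrence of Theorem \ref{tm_M1}, but that would require tracking leading coefficients through the subtraction and is less transparent than the direct construction.
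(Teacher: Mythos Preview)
Your proof is correct and follows essentially the same approach as the paper: exhibit an independent set contributing $\lfloor h/2\rfloor$ vertices on each cycle, and argue maximality via $\alpha(C_h)=\lfloor h/2\rfloor$. Your incidence-counting identity for the upper bound and the explicit distance-$2$ computation for the lower bound spell out details the paper leaves implicit (and you even flag the labelling quirk at $C^{(1)}$ that the paper's explicit set glosses over), but the underlying strategy is identical.
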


\begin{proof}
Let us consider set
\[
S=\left\{  v_{2k-1}^{(j)}:1\leq j\leq n,1\leq k\leq\left\lfloor \frac{h}%
{2}\right\rfloor \right\}  .
\]
This set is obviously independent on $M_{n}$ and each cycle $C^{(j)}$ contains
exactly $\left\lfloor \frac{h}{2}\right\rfloor $ vertices from $S.$ Since
cycle $C_{h}$ can contain at most $\left\lfloor \frac{h}{2}\right\rfloor $
independent vertices, it follows that $S$ is maximum independent set.
\end{proof}

\begin{theorem}
\label{tm_M3}The number of maximum independent sets in $M_{n}$, for $n\geq2$,
is
\[
\Psi_{\alpha(M_{n})}\left(  M_{n}\right)  =\left\{
\begin{tabular}
[c]{ll}%
$1,$ & for $h$ even,\\
$\left(  \frac{h-1}{2}\right)  ^{n-2}\cdot\left(  \frac{h+1}{2}\right)  ^{2},$
& for $h$ odd.
\end{tabular}
\right.
\]

\end{theorem}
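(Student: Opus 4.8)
The plan is to compute $\Psi_{\alpha(M_n)}(M_n)$ as the leading coefficient of $i(M_n)$, using the recurrence from Theorem~\ref{tm_M1} together with the degree information supplied by Theorem~\ref{tm_M2}. First I would record the relevant facts about paths: for even $h$ one has $\deg i(P_{h-4})=\deg i(P_{h-5})=\frac{h-2}{2}$ while $\deg i(P_{h-6})=\frac{h-4}{2}$, and for odd $h$ one has $\deg i(P_{h-4})=\frac{h-3}{2}$, $\deg i(P_{h-5})=\deg i(P_{h-6})=\frac{h-3}{2}$ as well (with the appropriate binomial leading coefficients, several of which are $1$). From these I would read off $\deg\alpha$ and $\deg\beta$ as polynomials in $x$, hence the degrees of the two terms $\alpha\cdot i(M_{n-1})$ and $x^2\beta\cdot i(M_{n-2})$ on the right-hand side of the recurrence. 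Since by Theorem~\ref{tm_M2} $\deg i(M_n)=n\lfloor h/2\rfloor$, comparing degrees tells us which term dominates (or whether both terms contribute to the leading coefficient), and the subtraction in the recurrence is what makes the odd and even cases behave differently.

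Concretely, for even $h$ the expected outcome is that $\deg\alpha=\frac{h}{2}$ and $\deg(x^2\beta)=\frac{h}{2}+1$, so that the $i(M_{n-2})$ term dominates and $\Psi_{\alpha(M_n)}(M_n)$ equals (leading coeff of $x^2\beta$)$\cdot\Psi_{\alpha(M_{n-2})}(M_{n-2})$; one then checks that the leading coefficient of $\beta$ is $1$ (the cross terms $i(P_{h-4})i(P_{h-5})$ and $i(P_{h-4})i(P_{h-6})$ must be shown to cancel at top degree, which is the delicate point), giving $\Psi_{\alpha(M_n)}(M_n)=\Psi_{\alpha(M_{n-2})}(M_{n-2})$, and the base values $\Psi_{\alpha(M_2)}(M_2)=\Psi_{\alpha(M_3)}(M_3)=1$ close the induction. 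For odd $h$ the degrees should align so that the $\alpha\cdot i(M_{n-1})$ term dominates strictly, whence $\Psi_{\alpha(M_n)}(M_n)=(\text{leading coeff of }\alpha)\cdot\Psi_{\alpha(M_{n-1})}(M_{n-1})$; the leading coefficient of $\alpha$ works out to $\frac{h-1}{2}$ (coming from $i(P_{h-5})$ and $i(P_{h-4})$), so $\Psi_{\alpha(M_n)}(M_n)=\frac{h-1}{2}\cdot\Psi_{\alpha(M_{n-1})}(M_{n-1})$, and iterating down to the base case $\Psi_{\alpha(M_2)}(M_2)=\bigl(\frac{h+1}{2}\bigr)^2$ (obtained directly from $i(M_2)=x\cdot i(P_{h-3})^2+i(P_{h-1})^2$, exactly as in the proof of Theorem~\ref{tm_O3}) yields the stated formula $\bigl(\frac{h-1}{2}\bigr)^{n-2}\bigl(\frac{h+1}{2}\bigr)^2$.

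I would carry out the steps in this order: (1) tabulate the degrees and top coefficients of $i(P_{h-4}),i(P_{h-5}),i(P_{h-6})$ in both parities; (2) derive $\deg\alpha$, $\deg\beta$ and their leading coefficients, being careful about cancellations; (3) compare term degrees in the recurrence to identify the dominant term in each parity; (4) establish the base cases $n=2$ (and $n=3$ for the even case) from the explicit short-chain polynomials; (5) finish by induction on $n$. The main obstacle will be step~(2): verifying that the potential cancellations among the cross terms in $\beta$ (respectively, checking that no unexpected cancellation drops the degree of $\alpha$) actually occur, since this requires knowing not just the top coefficients but also that the relevant combinations of binomial coefficients $\binom{h-1-k}{k}$ combine as claimed. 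Once the leading behaviour of $\alpha$ and $\beta$ is pinned down, the rest is a routine degree comparison and induction.
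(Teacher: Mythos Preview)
Your overall strategy---extract the leading coefficient of $i(M_n)$ from the recurrence of Theorem~\ref{tm_M1} together with the degree given by Theorem~\ref{tm_M2}---is sound and would work, but your degree bookkeeping is off in several places, and in the even case this leads you to the wrong dominant term. Using $\deg i(P_m)=\lceil m/2\rceil$, one has for even $h$
\[
\deg i(P_{h-4})=\deg i(P_{h-5})=\tfrac{h-4}{2},\qquad \deg i(P_{h-6})=\tfrac{h-6}{2},
\]
and for odd $h$
\[
\deg i(P_{h-4})=\tfrac{h-3}{2},\qquad \deg i(P_{h-5})=\deg i(P_{h-6})=\tfrac{h-5}{2},
\]
so several of the degrees you record are wrong. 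With the correct values, for even $h$ one gets $\deg\alpha=\tfrac{h}{2}$ (leading coefficient $1$, from the $x^2 i(P_{h-5})$ term) and $\deg(x^{2}\beta)=h-1$, hence
\[
\deg\bigl(\alpha\cdot i(M_{n-1})\bigr)=\tfrac{nh}{2}
\quad\text{but}\quad
\deg\bigl(x^{2}\beta\cdot i(M_{n-2})\bigr)=\tfrac{nh}{2}-1.
\]
So it is the $\alpha\cdot i(M_{n-1})$ term that dominates, not the $\beta$ term, and the correct recursion is $\Psi_{\alpha(M_n)}(M_n)=\Psi_{\alpha(M_{n-1})}(M_{n-1})$. (There is also a red flag you overlooked: the $x^{2}\beta$ term carries a \emph{minus} sign in Theorem~\ref{tm_M1}, so if it alone supplied the top coefficient you would get a negative count.) For odd $h$ the corrected degrees give $\deg\alpha=\tfrac{h-1}{2}$ with leading coefficient $1+\tfrac{h-3}{2}=\tfrac{h-1}{2}$ (from $x\,i(P_{h-4})$ and $x^{2}i(P_{h-5})$), and $\deg(x^{2}\beta)=h-2<h-1$, so indeed $\Psi_{\alpha(M_n)}(M_n)=\tfrac{h-1}{2}\,\Psi_{\alpha(M_{n-1})}(M_{n-1})$ as you wanted.

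It is worth noting that the paper proceeds quite differently and much more directly: it argues combinatorially that a maximum independent set of $M_n$ must avoid every cut vertex (since each cycle must carry exactly $\lfloor h/2\rfloor$ vertices of $S$), whence $v_1^{(j)}$ is forced into $S$ on every internal cycle and the count factors as $\Psi_{\alpha(P_{h-3})}(P_{h-3})^{\,n-2}\cdot\Psi_{\alpha(P_{h-1})}(P_{h-1})^{2}$, which evaluates immediately to the stated formula. Your recurrence approach mirrors the paper's treatment of the ortho case (Theorem~\ref{tm_O3}) and is a perfectly legitimate alternative, but it trades a two-line structural observation for a somewhat fragile degree computation; if you pursue it, be sure to redo step~(1) with the correct path degrees and revisit which term dominates in the even case.
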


\begin{proof}
Because of Theorem \ref{tm_M2}, maximum independent set $S$ in $M_{n}$ must
contain $\left\lfloor \frac{h}{2}\right\rfloor $ on each cycle. Note that
cycle $C_{h}$ contains at most $\left\lfloor \frac{h}{2}\right\rfloor $
independent vertices. If $S$ contained cut vertex, it would be counted in
$\left\lfloor \frac{h}{2}\right\rfloor $ independent vertices on two different
cycles, and consequently $S$ wouldn't be maximum independent set. Therefore,
maximum independent set $S$ on $M_{n}$ cannot contain cut vertices. Now note
the following: if $S$ is maximum independent set on $M_{n} $ then $v_{1}%
^{(j)}\in S$ for $j=2,\ldots,h-1$. We conclude
\begin{align*}
\Psi_{\alpha(M_{n})}  &  =\Psi_{\left\lfloor \frac{h}{2}\right\rfloor
-1}(P_{h-3})^{n-2}\cdot\Psi_{\left\lfloor \frac{h}{2}\right\rfloor }%
(P_{h-1})^{2}=\\
&  =\Psi_{\alpha(P_{h-3})}(P_{h-3})^{n-2}\cdot\Psi_{\alpha(P_{h-1})}%
(P_{h-1})^{2}.
\end{align*}
The claim now follows from the number of maximum independent sets on path.
\end{proof}

\bigskip

\noindent\textit{Extremality\medskip}

\noindent Now we want to establish extremal chain $h-$cacti with respect to
total number of independent sets. For that purpose we define relation
$\preceq$ on polynomials. Let $f(x)=\sum_{i=0}^{n}a_{i}x^{i}$ and
$g(x)=\sum_{i=0}^{n}b_{i}x^{i}$ be two polynomials. We say that $f\preceq g$
if $a_{i}\leq b_{i}$ for every $i=0,\ldots,n$. We say that $f\prec g$ if
$f\preceq g$ and $f\not =g$. Now, we need following two lemmas.

\begin{lemma}
\label{lm_E1}Let $A_{n}$ be a chain cactus of length $n\geq2$. For $2\leq
k\leq\left\lfloor \frac{h_{n}}{2}\right\rfloor $ holds
\[
i(A_{n}-v_{1}^{(n)})\prec i(A_{n}-v_{k}^{(n)}).
\]

\end{lemma}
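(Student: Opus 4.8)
The plan is to localise everything at the cut vertex $w:=v_{h_n}^{(n)}$ that $C^{(n)}$ shares with $C^{(n-1)}$, and at the subcactus $R:=A_{n-1}$. Since $C^{(n)}$ is an end cycle of $A_n$, none of $v_1^{(n)},\dots,v_{h_n-1}^{(n)}$ is a cut vertex, so deleting $v_1^{(n)}$ turns $C^{(n)}$ into a path and leaves exactly $R$ with one pendant path of $h_n-2$ new vertices hanging at $w$, whereas deleting $v_k^{(n)}$ leaves $R$ with two pendant paths hanging at $w$, with $c:=h_n-1-k$ and $d:=k-1$ new vertices respectively. Note $c+d=h_n-2$ and, because $2\le k\le\lfloor h_n/2\rfloor$, both $c\ge1$ and $d\ge1$ (and the statement is vacuous unless $h_n\ge4$); these two inequalities are exactly what will force the final bound to be strict.

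To handle the gluing of pendant paths I would carry the bookkeeping pair $\alpha:=i(R-w)$ and $\beta:=x\cdot i(R-N[w])$. By Theorem~\ref{tm_I1}, $i(R)=\alpha+\beta$; and by Theorem~\ref{tm_I2}, attaching a pendant path of $m$ new vertices at $w$ sends $(\alpha,\beta)$ to $(\alpha\,i(P_m),\,\beta\,i(P_{m-1}))$, using the conventions $i(P_0)=i(P_{-1})=1$. Applying this once, resp. twice, gives
\[
i(A_n-v_1^{(n)})=\alpha\,i(P_{c+d})+\beta\,i(P_{c+d-1}),\qquad
i(A_n-v_k^{(n)})=\alpha\,i(P_c)\,i(P_d)+\beta\,i(P_{c-1})\,i(P_{d-1}).
\]

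Next I would subtract these and invoke the elementary identity
\[
i(P_a)\,i(P_b)-i(P_{a+b})=x\bigl(i(P_{a+b-1})-i(P_{a-1})\,i(P_{b-1})\bigr),
\]
obtained by comparing the two instances $i(P_{a+b+1})=i(P_a)\,i(P_b)+x\,i(P_{a-1})\,i(P_{b-1})$ and $i(P_{a+b+1})=i(P_{a+b})+x\,i(P_{a+b-1})$ of Theorem~\ref{tm_I1} (conditioning on a middle vertex, resp. on an endpoint). This collapses the difference to the single product
\[
i(A_n-v_k^{(n)})-i(A_n-v_1^{(n)})=(x\alpha-\beta)\cdot\bigl(i(P_{c+d-1})-i(P_{c-1})\,i(P_{d-1})\bigr),
\]
and it remains only to check that each factor is $\succ0$. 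For the first, $x\alpha-\beta=x\bigl(i(R-w)-i(R-N[w])\bigr)\succ0$, because $R-N[w]$ is an induced subgraph of $R-w$ and $w$ has a neighbour in $R=A_{n-1}$, so some singleton independent set is counted by $i(R-w)$ but not by $i(R-N[w])$. For the second, $i(P_{c-1})\,i(P_{d-1})$ is the independence polynomial of the disjoint union of a path on $c-1$ and a path on $d-1$ vertices, which is isomorphic to the induced subgraph of $P_{c+d-1}$ obtained by deleting its $c$-th vertex; an induced subgraph has coefficientwise no larger independence polynomial, and the singleton at the deleted vertex witnesses strictness (here one uses $c\ge1$, so that $P_{c+d-1}$ really has a $c$-th vertex). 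Since a product of two polynomials with nonnegative coefficients, neither identically zero, is again of this kind, the right-hand side is $\succ0$, i.e. $i(A_n-v_1^{(n)})\prec i(A_n-v_k^{(n)})$.

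The main obstacle is the sign juggling rather than any deep difficulty: a naive split of the difference into its ``$\alpha$-part'' and ``$\beta$-part'' does not work, because $i(P_c)\,i(P_d)\succ i(P_{c+d})$ while $i(P_{c-1})\,i(P_{d-1})\prec i(P_{c+d-1})$ in general, so the two parts pull in opposite directions; the identity above is precisely the device that factors out the common term and reduces matters to the two transparent monotonicity statements (passing to an induced subgraph can only decrease $i$). A little care is also needed with the degenerate path indices $P_0,P_{-1}$, which behave correctly under the stated conventions.
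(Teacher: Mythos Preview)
Your proof is correct and takes a genuinely different route from the paper. The paper argues combinatorially: it builds, case by case, an explicit size-preserving injection from the independent sets of $G_1=A_n-v_1^{(n)}$ into those of $G_k=A_n-v_k^{(n)}$ (essentially by reflecting the configuration along the arc of $C^{(n)}$ between the two deleted vertices, distinguishing whether $v_k$ lies in $S$ or not), and then exhibits a concrete independent set in $G_k$ with no preimage to force strictness. You instead localise algebraically at the cut vertex $w$, express both polynomials in the common coordinates $(\alpha,\beta)=(i(R-w),\,x\,i(R-N[w]))$, and use the path identity $i(P_a)i(P_b)-i(P_{a+b})=x\bigl(i(P_{a+b-1})-i(P_{a-1})i(P_{b-1})\bigr)$ to factor the difference as $(x\alpha-\beta)\cdot\bigl(i(P_{c+d-1})-i(P_{c-1})i(P_{d-1})\bigr)$, a product of two coefficientwise nonnegative, nonzero polynomials. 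Your argument is shorter and avoids the case analysis entirely; it also makes transparent why the hypothesis $2\le k\le\lfloor h_n/2\rfloor$ is needed (precisely so that $c,d\ge1$ and the second factor is nonzero) and why $n\ge2$ is needed (so that $w$ has neighbours in $R$ and the first factor is nonzero). The paper's injection, on the other hand, is more hands-on and can be read as a bijective proof of your path identity in disguise; it also supplies an explicit witness of strictness rather than appealing to induced-subgraph monotonicity. Both arguments work verbatim for general chain cacti, since neither uses uniformity of the cycle lengths.
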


\begin{proof}
Throughout the proof we will focus on $C^{(n)},$ so we will use notation
$C=C^{(n)}$, $h=h_{n}$ and $v_{i}=v_{i}^{(n)}.$ Also, we will denote
$G_{k}=A_{n}-v_{k}^{(n)}$. The claim of the lemma is now
\[
i(G_{1})\prec i(G_{k}).
\]
To prove it we need to prove
\begin{align*}
i(G_{1})  &  \preceq i(G_{k}),\\
i(G_{1})  &  \not =i(G_{k}).
\end{align*}
To prove
\[
i(G_{1})\preceq i(G_{k}),
\]
it is sufficient to prove that for every independent set $S$ on $G_{1}$ there
is corresponding (1) independent set $S^{\prime}$ on $G_{k}$ which is (2) of
the same size as $S$ such that (3) mapping $S\mapsto S^{\prime}$ is injection.

CASE I: $v_{k}\not \in S.$ Then we define $S^{\prime}=S.$ Obviously,
$S^{\prime}$ is well defined independent set on $G_{k}$ of the same size as
$S$, and this mapping is injection. Note that in this case $v_{1}%
\not \in S^{\prime}$ (since $v_{1}\not \in S$).

CASE\ II: $v_{k}\in S.$ Note that in this case $v_{k+1}\not \in S,$ and also
$v_{1}\not \in S$ (since $v_{1}\not \in G_{1}$). We define $S^{\prime}$ in the
following manner:
\[%
\begin{tabular}
[c]{ll}%
$v\in S^{\prime}\Longleftrightarrow v\in S$ & for $v\in V\backslash V(C),$\\
$v_{i}\in S^{\prime}\Longleftrightarrow v_{k+1-i}\in S$ & for $1\leq i\leq
k-1,$\\
$v_{i}\in S^{\prime}\Longleftrightarrow v_{i+1}\in S$ & for $k+1\leq i\leq
h-1.$%
\end{tabular}
\]
This construction is illustrated on Figure \ref{figure3}. First note that
$S^{\prime}$ is well defined set of vertices from $G_{k}$ since $v_{k}%
\not \in S^{\prime}$ by definition. Now, note that $S^{\prime}$ is independent
on $G_{k}-v_{h}$ since $S$ is independent on $G_{1}$. Since $v_{h}%
\not \in S^{\prime}$ by construction, it follows that $S^{\prime}$ is
independent on $G_{k}$ too. Furthermore, $S$ and $S^{\prime}$ are of the same
size since their cardinalities obviously coincide on $V\backslash V(C)$ and
$\left\{  v_{1},\ldots,v_{k}\right\}  .$ Also, because of $v_{k+1}\not \in S$
their cardinalities coincide on $\left\{  v_{k+1},\ldots,v_{h}\right\}  $ too.
Let us now show that $S\mapsto S^{\prime}$ is injection. If sets $S$ differ on
$V\backslash V(C)$ or $\{v_{2},\ldots,v_{k},v_{k+2},\ldots,v_{h}\}$ then
corresponding sets $S^{\prime}$ differ on $V\backslash V(C)$ or $\{v_{1}%
,\ldots,v_{k-1},v_{k+1},\ldots,v_{h-1}\}$ respectively. Also, sets $S$ can't
differ on $v_{k+1}$ since in this case $v_{k+1}\not \in S$. Hence $S\mapsto
S^{\prime}$ is injection and we have proved the claim for this case. Note that
in this case $v_{1}\in S^{\prime}$ (since $v_{k}\in S$).

We still have to prove overall injectivity (across the cases) of mapping
$S\mapsto S^{\prime}$. Let now $S_{1}$ be independent set from first case
($v_{k}\not \in S_{1}$) and $S_{2}$ be independent set from second case
($v_{k}\in S_{2}$). What remains to be proved is that $S_{1}^{\prime}%
\not =S_{2}^{\prime}.$ But we have noted that in the first case $v_{1}%
\not \in S_{1}^{\prime},$ and in the second case $v_{1}\in S_{2}^{\prime}.$
Hence, $S_{1}^{\prime}\not =S_{2}^{\prime}$ and we conclude that mapping
$S\mapsto S^{\prime}$ is overall injection from independent sets on $G_{1}$ to
independent sets on $G_{k}$ of the same size.

To prove that
\[
i(G_{1})\not =i(G_{k})
\]
it enough to find independent set $S^{\prime}$ on $G_{k}$ for which there is
no independent set $S$ on $G_{1}$ such that $S\mapsto S^{\prime}$. Let
$v_{j}^{(n-1)}$ be the cut vertex on $C^{(n-1)}$ of $A_{n}$ such that
$v_{j}^{(n-1)}=v_{h}^{(n)}$. Let us consider set $S^{\prime}=\left\{
v_{j+1}^{(n-1)},v_{1}^{(n)},v_{h-1}^{(n)}\right\}  $. We claim that
$S^{\prime}$ is such set. Suppose contrary, i.e. that there is independent set
$S$ on $G_{1}$ such that $S\mapsto S^{\prime}$. Then $S$ should be from second
case since $v_{1}^{(n)}\in S^{\prime}$. From construction of second case
follows that $S=\left\{  v_{j+1}^{(n-1)},v_{h}^{(n)}=v_{j}^{(n-1)},v_{k}%
^{(n)}\right\}  .$ But such $S$ is not independent because of edge
$e=v_{j+1}^{(n-1)}v_{j}^{(n-1)}.$ Therefore, we have contradiction.
\end{proof}

%

\begin{figure}[h]%
\centering
\includegraphics[
natheight=3.039800in,
natwidth=8.139600in,
height=1.8516in,
width=4.9113in
]%
{Figure3.jpg}%
\caption{With the proof of Lemma \ref{lm_E1}. Squared filled vertices are
certainly included in independent set, while squared empty vertices are
certainly excluded.}%
\label{figure3}%
\end{figure}

\begin{lemma}
\label{lm_E2}Let $A_{n}$ be a chain cactus of length $n\geq2.$ For $3\leq
k\leq\left\lfloor \frac{h_{n}}{2}\right\rfloor $ holds
\[
i(A_{n}-v_{k}^{(n)})\prec i(A_{n}-v_{2}^{(n)}).
\]

\end{lemma}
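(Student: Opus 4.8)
The plan is to mimic the bijective strategy of Lemma~\ref{lm_E1}, but now comparing the deletion of $v_k^{(n)}$ (for $3\le k\le \lfloor h_n/2\rfloor$) against the deletion of $v_2^{(n)}$, which should be the ``richer'' of the two. As before, I restrict attention to the last cycle $C=C^{(n)}$ and write $h=h_n$, $v_i=v_i^{(n)}$, and $G_k=A_n-v_k^{(n)}$, so the claim becomes $i(G_k)\prec i(G_2)$. It suffices to exhibit, for every independent set $S$ on $G_k$, a size-preserving $S'$ on $G_2$ together with an injection $S\mapsto S'$, and then to find one independent set on $G_2$ with no preimage.

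The construction of $S'$ splits on whether $v_2\in S$. In the easy case $v_2\notin S$ one sets $S'=S$, which is automatically independent on $G_2$ and lands in the subfamily of sets avoiding $v_2$; I would also record that in this case the status of $v_1$ in $S'$ equals its status in $S$. In the case $v_2\in S$ (so $v_1,v_3\notin S$), I would ``rotate'' the part of the path $C-v_k$ that lies between $v_k$ and $v_2$: keep $S'=S$ off $V(C)$, put $v_i\in S'\iff v_{3-i}\in S$ for the short arc $1\le i\le k-1$ running back toward $v_1$, and shift $v_i\in S'\iff v_{i+1}\in S$ on the long arc $k+1\le i\le h-1$; since $v_2$ is excluded by fiat and $v_k$ is the deleted vertex in $G_k$, this produces a well-defined vertex set on $G_2$. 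One then checks independence on $G_2-v_h$ (inherited from independence of $S$ on $G_k$), notes $v_h\notin S'$, hence independence on all of $G_2$; size preservation follows because cardinalities match off $V(C)$, on $\{v_1,v_2,v_k\}$, and on the remaining vertices (using $v_3\notin S$ to handle the boundary of the shift). Injectivity within each case is the usual ``different on some coordinate block $\Rightarrow$ images different on the corresponding block'' argument, and injectivity across the two cases follows from the recorded dichotomy about $v_1$: in the first case $v_1\in S'\iff v_1\in S$ while in the second case the membership of $v_1$ in $S'$ is forced by a coordinate ($v_2$) that distinguishes the cases — I would pick the bookkeeping (perhaps track $v_1$ together with $v_2$, or introduce a marked vertex such as a cut-vertex neighbour on $C^{(n-1)}$) so that the two image-families are provably disjoint, exactly as in Lemma~\ref{lm_E1}.

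For strictness $i(G_k)\ne i(G_2)$, I would exhibit a concrete independent set $S'$ on $G_2$ that cannot arise as an image: let $v_j^{(n-1)}$ be the cut vertex of $C^{(n-1)}$ equal to $v_h^{(n)}$, and take $S'=\{v_{j+1}^{(n-1)},\,v_1^{(n)},\,v_{h-1}^{(n)}\}$ (adapting indices so the three vertices are genuinely independent in $G_2$ — this uses $k\ge 3$, so that $v_2$ is absent and $v_1,v_{h-1}$ are non-adjacent on $C-v_2$). Assuming $S'$ were an image, the presence of $v_1^{(n)}$ would force $S$ to come from the ``$v_2\in S$'' case, and reversing the rotation would reconstruct $S=\{v_{j+1}^{(n-1)},\,v_h^{(n)}=v_j^{(n-1)},\,v_k^{(n)}\}$, which is not independent because of the edge $v_{j+1}^{(n-1)}v_j^{(n-1)}$ of $C^{(n-1)}$ — a contradiction. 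The main obstacle I anticipate is purely combinatorial bookkeeping: getting the index ranges of the two-arc rotation exactly right when $k$ is close to $\lfloor h/2\rfloor$ (so the two arcs nearly meet), and making sure the across-case injectivity invariant is stated in terms of a coordinate that truly separates the cases; once those indices are pinned down, everything reduces to the same routine verifications (well-definedness, independence, equicardinality, injectivity) carried out in Lemma~\ref{lm_E1}.
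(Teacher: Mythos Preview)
Your plan follows the right template, but the two-case split on $v_2\in S$ is not enough here, and this is a genuine gap rather than mere bookkeeping. In Lemma~\ref{lm_E1} the case hypothesis $v_k\in S$ automatically forced $v_{k+1}\notin S$, so the long-arc shift $v_i\in S'\Longleftrightarrow v_{i+1}\in S$ for $k+1\le i\le h-1$ lost nothing. In the present lemma the case hypothesis is $v_2\in S$, which forces $v_1,v_3\notin S$ but says nothing about $v_{k+1}$. When $v_{k+1}\in S$, your long-arc shift has no image for $v_{k+1}$, so $|S'|=|S|-1$ and size preservation fails; equivalently, two sets $S$ differing only in $v_{k+1}$ collapse to the same $S'$, so injectivity fails too. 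The paper resolves this by splitting the case $v_2\in S$ further according to whether $v_{k+1}\in S$: when $v_{k+1}\notin S$ one reflects the short arc $\{v_2,\ldots,v_{k-1}\}$ onto $\{v_3,\ldots,v_k\}$ and leaves the long arc \emph{fixed} (no shift at all); when $v_{k+1}\in S$ one reflects \emph{both} arcs and inserts $v_1$ into $S'$ to compensate for the lost $v_{k+1}$. The across-case invariant is then membership of $v_k$ (separating Case~I from Cases~II/III) together with membership of $v_1$ (separating Case~II from Case~III), not $v_1$ alone as you suggest.

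Two smaller points. First, your short-arc formula $v_i\in S'\Longleftrightarrow v_{3-i}\in S$ is undefined for $i\ge 3$ and cannot be the intended reflection; the correct map is $v_i\in S'\Longleftrightarrow v_{k+2-i}\in S$ for $3\le i\le k$. Second, your strictness witness $S'=\{v_{j+1}^{(n-1)},v_1^{(n)},v_{h-1}^{(n)}\}$ does not work: this very set is also independent on $G_k$ (none of its three vertices is $v_k$ and no two are adjacent), so it lies in the image of your Case~I identity map. The paper's witness $\{v_{j+1}^{(n-1)},v_1^{(n)},v_k^{(n)},v_{k+2}^{(n)}\}$ is chosen to contain both $v_1$ and $v_k$, which forces any preimage into Case~III and then forces it to contain the adjacent pair $v_j^{(n-1)},v_{j+1}^{(n-1)}$.
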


\begin{proof}
Again, throughout the proof we will focus on $C^{(n)},$ so we will use
notation $C=C^{(n)},$ $h=h_{n}$ and $v_{i}=v_{i}^{(n)}.$ Also, we will denote
$G_{k}=A_{n}-v_{k}^{(n)}$. The claim of the lemma is now
\[
i(G_{k})\prec i(G_{2}).
\]
To prove it we need to prove
\begin{align*}
i(G_{k})  &  \preceq i(G_{2}),\\
i(G_{k})  &  \not =i(G_{2}).
\end{align*}
To prove
\[
i(G_{k})\preceq i(G_{2}),
\]
it is sufficient to prove that for every independent set $S$ on $G_{k}$ there
is corresponding (1) independent set $S^{\prime}$ on $G_{2}$ which is (2) of
the same size as $S$ such that (3) mapping $S\mapsto S^{\prime}$ is injection.
Let $S$ be independent set on $G_{k}$. We distinguish three cases.

CASE\ I: $v_{2}\not \in S.$ Then we define $S^{\prime}=S.$ Obviously,
$S^{\prime}$ is well defined independent set on $G_{2}$ ($v_{2}\not \in
S^{\prime}$ since $v_{2}\not \in S$), $S$ and $S^{\prime}$ are of the same
size and $S\mapsto S^{\prime}$ is injection. Therefore, the claim is proved in
this case. Note that in this case $v_{k}\not \in S^{\prime}$ (since
$v_{k}\not \in S$).

CASE\ II: $v_{2}\in S$ and $v_{k+1}\not \in S$. Note that in this case
$v_{1}\not \in S$ and also $v_{k}\not \in S$ (since $v_{k}\not \in G_{k}$). We
define $S^{\prime}$ in the following manner:
\[%
\begin{tabular}
[c]{ll}%
$v\in S^{\prime}\Longleftrightarrow v\in S$ & for $v\in V\backslash V(C),$\\
$v_{i}\in S^{\prime}\Longleftrightarrow v_{k+2-i}\in S$ & for $3\leq i\leq
k,$\\
$v_{i}\in S^{\prime}\Longleftrightarrow v_{i}\in S$ & for $k+1\leq i\leq h$.
\end{tabular}
\
\]
This construction is illustrated on Figure \ref{figure4}. First, note that
$S^{\prime}$ is well defined set of vertices from $G_{2}$ since $v_{2}%
\not \in S^{\prime}$ by definition. Further, if we denote $e=v_{k}v_{k+1}$ we
can see that $S^{\prime}$ is independent $G_{2}-e$ since $S$ is independent on
$G_{k}$. Since $v_{k+1}\not \in S^{\prime}$ by construction (follows from
$v_{k+1}\not \in S$) we conclude that $S^{\prime}$ is independent on $G_{2}$
too. As for the size, sets $S$ and $S^{\prime}$ are of the same size because
for every vertex from $S$ there is by definition corresponding vertex in
$S^{\prime}$. Further, if sets $S$ differ on $V\backslash V(C)\cup\left\{
v_{k+1},\ldots,v_{h}\right\}  $ or $\{v_{2},\ldots,v_{k-1}\}$ then
corresponding sets $S^{\prime}$ differ on $V\backslash V(C)\cup\left\{
v_{k+1},\ldots,v_{h}\right\}  $ or $\{v_{3},\ldots,v_{k}\}$ respectively.
Also, sets $S$ can't differ on $v_{1}$ since $v_{1}\not \in S.$ We conclude
that mapping $S\rightarrow S^{\prime}$ of sets from this case is injection.
Hence, we have proved the claim in this case. Note that in this case
$v_{1}\not \in S^{\prime}$ and $v_{k}\in S^{\prime}.$

CASE III. $v_{2}\in S$ and $v_{k+1}\in S.$ We define $S^{\prime}$ in the
following manner:%
\[%
\begin{tabular}
[c]{ll}%
$v\in S^{\prime}\Longleftrightarrow v\in S$ & for $v\in V\backslash V(C),$\\
$v_{i}\in S^{\prime}\Longleftrightarrow v_{k+2-i}\in S$ & for $3\leq i\leq
k,$\\
$v_{i}\in S^{\prime}\Longleftrightarrow v_{h+k+2-i}\in S$ & for $k+2\leq i\leq
h,$\\
$v_{1}\in S^{\prime}.$ &
\end{tabular}
\]
This construction is illustrated on Figure \ref{figure5}. First note that
$S^{\prime}$ is well defined set of vertices from $G_{2}$ since $v_{2}%
\not \in S^{\prime}$ by definition. Now, let $e=v_{k}v_{k+1}$. Set $S^{\prime
}$ is independent set on $G_{2}-e-v_{h}$ since $S$ is independent on $G_{k}$.
Edge $e$ does not cause problems with independence of $S^{\prime}$, since
$v_{k+1}\not \in S^{\prime}$ by definition. Also, no edge incident with
$v_{h}$ is problem since $v_{h}\not \in S^{\prime}$ (since $v_{k+2}\not \in
S$, which is since $v_{k+1}\in S$). Therefore, $S^{\prime}$ is independent on
$G_{2}$ too. As for the size, sets $S$ and $S^{\prime}$ are of the same size
because for every vertex from $S$ there is by definition corresponding vertex
in $S^{\prime}$. Furthermore, if sets $S$ differ on $V\backslash V(C)$ or
$\{v_{2},\ldots,v_{k-1},v_{k+2},\ldots,v_{h}\}$ then sets $S^{\prime}$ differ
on $V\backslash V(C)$ or $\{v_{3},\ldots,v_{k},v_{k+2},\ldots,v_{h}\}$
respectively. Sets $S$ cannot differ on $v_{k+1}$ or $v_{1}$ since $v_{k+1}\in
S$ and $v_{1}\not \in S$ for all $S$ in this case. Therefore, mapping
$S\rightarrow S^{\prime}$ of sets from this case is injection. Hence, we have
proved the claim in this case too. Note that in this case $v_{1}\in S^{\prime
}$ and $v_{k}\in S^{\prime}$.

What remains to be proved is that mapping $S\rightarrow S^{\prime}$ is overall
injection (across the cases). Let $S_{1}$ be independent set from first case,
$S_{2}$ from second case and $S_{3}$ from third case. Than $S_{1}^{\prime
}\not =S_{2}^{\prime}$ and $S_{1}^{\prime}\not =S_{3}^{\prime} $ since
$v_{k}\not \in S_{1}^{\prime}$ and $v_{k}\in S_{2}^{\prime},S_{3}^{\prime}.$
Also, $S_{2}^{\prime}\not =S_{3}^{\prime}$ since $v_{1}\not \in S_{2}^{\prime
}$ and $v_{1}\in S_{3}^{\prime}.$ Therefore, mapping $S\rightarrow S^{\prime}$
is overall injection.

To prove that
\[
i(G_{k})\not =i(G_{k})
\]
it enough to find independent set $S^{\prime}$ on $G_{2}$ for which there is
no independent set $S$ on $G_{k}$ such that $S\mapsto S^{\prime}$. Let
$v_{j}^{(n-1)}$ be the cut vertex on $C^{(n-1)}$ of $A_{n}$ such that
$v_{j}^{(n-1)}=v_{h}^{(n)}$. Let us consider set $S^{\prime}=\left\{
v_{j+1}^{(n-1)},v_{1}^{(n)},v_{k}^{(n)},v_{k+2}^{(n)}\right\}  $. We claim
that $S^{\prime}$ is such set. Suppose contrary, i.e. that there is
independent set $S$ on $G_{1}$ such that $S\mapsto S^{\prime}$. Then $S$
should be from third case since $v_{k}^{(n)}\in S^{\prime}$ and $v_{1}%
^{(n)}\in S^{\prime}$. From construction of the third case follows that
$S=\left\{  v_{j+1}^{(n)},v_{h}^{(n)}=v_{j}^{(n-1)},v_{2}^{(n)},v_{k+1}%
^{(n)}\right\}  .$ But such $S$ is not independent because of edge
$e=v_{j+1}^{(n-1)}v_{j}^{(n-1)}.$ Therefore, we have contradiction.
\end{proof}

%

\begin{figure}[h]%
\centering
\includegraphics[
natheight=3.366700in,
natwidth=8.153500in,
height=2.047in,
width=4.9199in
]%
{Figure4.jpg}%
\caption{With the proof of Lemma \ref{lm_E2}. Squared filled vertices are
certainly included in independent set, while squared empty vertices are
certainly excluded.}%
\label{figure4}%
\end{figure}
%

\begin{figure}[h]%
\centering
\includegraphics[
natheight=3.119400in,
natwidth=8.206200in,
height=1.8991in,
width=4.9519in
]%
{Figure5.jpg}%
\caption{With the proof of Lemma \ref{lm_E2}. Squared filled vertices are
certainly included in independent set, while squared empty vertices are
certainly excluded.}%
\label{figure5}%
\end{figure}

Setting $x=1$ in polynomials from Lemmas \ref{lm_E1} and \ref{lm_E2} we obtain
following corollary.

\begin{corollary}
\label{cor_E3}Let $A_{n}$ be a chain cactus of length $n\geq2.$ For $3\leq
k\leq\left\lfloor \frac{h_{n}}{2}\right\rfloor $ holds
\[
\Psi(A_{n}-v_{1}^{(n)})<\Psi(A_{n}-v_{k}^{(n)})<\Psi(A_{n}-v_{2}^{(n)}).
\]

\end{corollary}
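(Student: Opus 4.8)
The plan is to obtain Corollary~\ref{cor_E3} as the numerical specialization ($x=1$) of Lemmas~\ref{lm_E1} and~\ref{lm_E2}, exploiting the elementary fact that evaluation at $x=1$ turns the coefficientwise order $\prec$ into a strict numerical inequality. Recall that $\Psi(G)=i(G;1)$, so it suffices to show that $f\prec g$ implies $f(1)<g(1)$ for the polynomials appearing in those two lemmas.

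First I would record this monotonicity fact precisely. Suppose $f(x)=\sum_{i=0}^{m}a_ix^i$ and $g(x)=\sum_{i=0}^{m}b_ix^i$ satisfy $f\prec g$, that is, $a_i\le b_i$ for every $i$ and $f\neq g$. Since the coefficients are comparable termwise, $f\neq g$ forces $a_j<b_j$ for at least one index $j$; summing the inequalities $a_i\le b_i$ over $i\neq j$ and adding the strict inequality $a_j<b_j$ gives $f(1)=\sum_i a_i<\sum_i b_i=g(1)$. No nonnegativity of the coefficients is even needed for this, although of course the relevant coefficients $\Psi_k$ are nonnegative integers.

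Then I would simply apply the two lemmas. For $3\le k\le\left\lfloor \frac{h_n}{2}\right\rfloor$, Lemma~\ref{lm_E1} (which is in fact valid already for $k\ge 2$) gives $i(A_n-v_1^{(n)})\prec i(A_n-v_k^{(n)})$, and Lemma~\ref{lm_E2} gives $i(A_n-v_k^{(n)})\prec i(A_n-v_2^{(n)})$. Evaluating both at $x=1$ and invoking the fact above yields
\[
\Psi(A_n-v_1^{(n)})<\Psi(A_n-v_k^{(n)})<\Psi(A_n-v_2^{(n)}),
\]
which is exactly the assertion of the corollary.

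The hard part has already been done: all the combinatorial content — the size-preserving injections between the families of independent sets — lives in the proofs of Lemmas~\ref{lm_E1} and~\ref{lm_E2}, so there is essentially no obstacle here. The only points that deserve a moment's care are (i) that the index ranges of the two lemmas are compatible with the range $3\le k\le\left\lfloor \frac{h_n}{2}\right\rfloor$ of the corollary, which they are, since Lemma~\ref{lm_E1} even covers $k=2$ and Lemma~\ref{lm_E2} covers all $k\ge 3$; and (ii) that the lemmas deliver $\prec$ rather than merely $\preceq$, which is precisely what makes the displayed inequalities strict.
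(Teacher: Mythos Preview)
Your argument is correct and follows the same approach as the paper, which simply records that setting $x=1$ in Lemmas~\ref{lm_E1} and~\ref{lm_E2} yields the corollary. You have merely made explicit the trivial observation that $f\prec g$ implies $f(1)<g(1)$ and checked the index ranges, which the paper leaves to the reader.
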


Now we can proceed with the main theorem.

\begin{theorem}
\label{tm_E4}Let $A_{n}$ be a chain $h-$cactus of length $n\geq3$ such that
$A_{n}\not =M_{n}$ and $A_{n}\not =O_{n}$. Then%
\[
\Psi(O_{n})<\Psi(A_{n})<\Psi(M_{n}).
\]

\end{theorem}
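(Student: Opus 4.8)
The plan is to prove the two inequalities $\Psi(O_n)<\Psi(A_n)$ and $\Psi(A_n)<\Psi(M_n)$ by induction on the length $n$, using Corollary \ref{cor_E3} as the engine that compares the effect of different cut-vertex positions. The key observation is that for any chain $h$-cactus $A_n$, if we condition on how the independent sets meet the last cycle $C^{(n)}$, Theorem \ref{tm_I1} applied at the cut vertex $v_h^{(n)}$ (which lies in both $C^{(n-1)}$ and $C^{(n)}$) lets us write $\Psi(A_n)$ as a fixed linear combination (with coefficients depending only on $h$ and on paths, hence nonnegative) of quantities of the form $\Psi(A_{n-1})$ and $\Psi(A_{n-1}-v_j^{(n-1)})$, where $j$ is determined by the position of $C^{(n-1)}$ in the chain. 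More precisely, one peels off $C^{(n)}$ to express $\Psi(A_n)$ in terms of $\Psi(A_{n-1})$ and $\Psi(A_{n-1}-v_h^{(n)})=\Psi(A_{n-1}-v_j^{(n-1)})$, where $v_j^{(n-1)}$ is the \emph{other} cut vertex of $C^{(n-1)}$, i.e. $j$ records the position of cycle $C^{(n-1)}$.

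\textbf{Base case and reduction.} First I would verify the statement for $n=3$: here $A_3\ne O_3,M_3$ is impossible when $h$ is small, and in general $A_3$ is determined by the single position $k$ of its middle cycle with $1\le k\le\lfloor h/2\rfloor$; by Corollary \ref{cor_E3} (applied with the roles of the two end cycles) the value $\Psi(A_3)$ is strictly increasing in a sense that pins the minimum at $k=1$ (ortho) and the maximum at $k=2$ (meta), giving $\Psi(O_3)<\Psi(A_3)<\Psi(M_3)$ for any intermediate $k$; one must also check directly the case $k=1$ versus $k=2$ endpoints against a genuinely different $A_3$. For the inductive step, assume the result for all shorter chains. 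Write $A_n$ by removing its last cycle: $\Psi(A_n)=c_0(h)\,\Psi(A_{n-1})+c_1(h)\,\Psi(A_{n-1}-v_h^{(n)})$ with $c_0(h),c_1(h)>0$ depending only on $h$ (these are the path-independence numbers appearing in Theorems \ref{tm_O1} and \ref{tm_M1} specialized at $x=1$). The same identity holds with $A_n$ replaced by $O_n$ or $M_n$ and with $v_h^{(n)}$ being the respective cut vertex. Then $v_h^{(n)}$ is the cut vertex $v_{k}^{(n-1)}$ of $C^{(n-1)}$ for some position $k$: $k=1$ if $C^{(n-1)}$ is ortho, $k=2$ if meta, and in general $1\le k\le\lfloor h/2\rfloor$.

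\textbf{Combining the pieces.} To get $\Psi(A_n)<\Psi(M_n)$: by Corollary \ref{cor_E3}, $\Psi(A_{n-1}-v_h^{(n)})=\Psi(A_{n-1}-v_k^{(n-1)})\le\Psi(A_{n-1}-v_2^{(n-1)})$, and by the inductive hypothesis together with the fact that $\Psi(\cdot-v_2^{(n-1)})$ is monotone under the chain ordering (which itself needs a short sub-argument: removing $v_2^{(n-1)}$ from $A_{n-1}$ and from $M_{n-1}$ and comparing, again via conditioning on the last cycle and induction), one gets $\Psi(A_{n-1}-v_h^{(n)})\le\Psi(M_{n-1}-v_2^{(n-1)})$ with equality only if $A_{n-1}=M_{n-1}$ and $k=2$. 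Likewise $\Psi(A_{n-1})\le\Psi(M_{n-1})$. Feeding both into the peeling identity with positive coefficients yields $\Psi(A_n)\le\Psi(M_n)$, and strictness follows because $A_n\ne M_n$ forces either $A_{n-1}\ne M_{n-1}$ or $k\ne 2$ (i.e. $C^{(n-1)}$ is not in meta-position), either of which makes one of the two inequalities strict. The bound $\Psi(O_n)<\Psi(A_n)$ is entirely symmetric, using the left inequality of Corollary \ref{cor_E3} ($\Psi(A_{n-1}-v_1^{(n-1)})<\Psi(A_{n-1}-v_k^{(n-1)})$ for $k\ge 2$, with the case $k=1$ handled by induction) and $\Psi(O_{n-1})\le\Psi(A_{n-1})$.

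\textbf{Main obstacle.} The routine part is the peeling identity and bookkeeping of positive coefficients. The delicate point is the simultaneous induction: to run the step for $\Psi(A_n)$ vs $\Psi(M_n)$ I need not just $\Psi(A_{n-1})\le\Psi(M_{n-1})$ but also the comparison of the \emph{punctured} graphs $\Psi(A_{n-1}-v_2^{(n-1)})$ vs $\Psi(M_{n-1}-v_2^{(n-1)})$ — so the correct induction hypothesis must be a strengthened statement comparing $A_m$, $O_m$, $M_m$ \emph{together with} a removed cut vertex on the last cycle, in all admissible positions. Setting up this strengthened hypothesis so that it is self-propagating — and checking that the strict-inequality case analysis (exactly which configurations of $A_{n-1}$ and position $k$ force strictness) is airtight — is where the real work lies; the geometric content, however, is already isolated in Lemmas \ref{lm_E1} and \ref{lm_E2}.
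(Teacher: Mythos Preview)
Your proposal is correct and follows essentially the same route as the paper: the peeling identity $\Psi(A_n)=\Psi(P_{h-3})\,\Psi(A_{n-1})+\Psi(P_{h-2})\,\Psi(A_{n-1}-v_k^{(n-1)})$, Corollary~\ref{cor_E3} to compare cut-vertex positions, and a simultaneous induction that carries along the punctured comparisons. The paper makes explicit exactly the strengthened hypothesis you anticipate, proving the three claims $\Psi(O_{n-1}-v_1^{(n-1)})\le\Psi(A_{n-1}-v_1^{(n-1)})$, $\Psi(A_{n-1}-v_2^{(n-1)})\le\Psi(M_{n-1}-v_2^{(n-1)})$, and $\Psi(O_n)<\Psi(A_n)<\Psi(M_n)$ together (so only positions $1$ and $2$ are needed, not all admissible positions), and it verifies the punctured recurrences $\Psi(A_{n-1}-v_1^{(n-1)})=\Psi(A_{n-2})\Psi(P_{h-3})+\Psi(A_{n-2}-v_j^{(n-2)})\Psi(P_{h-4})$ and the analogous one for $v_2^{(n-1)}$ directly; this is precisely the ``self-propagating'' setup you flag as the main obstacle.
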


\begin{proof}
Let $A_{n}$ be any chain $h-$cacti of length $n\geq3$ such that $A_{n}%
\not =M_{n}$ and $A_{n}\not =O_{n}$. Then
\begin{align*}
i(A_{n})  &  =x\cdot i(A_{n-1}-N[v_{k}^{(n-1)}])\cdot i(P_{h-3})+i(A_{n-1}%
-v_{k}^{(n-1)})\cdot i(P_{h-1})=\\
&  =x\cdot i(A_{n-1}-N[v_{k}^{(n-1)}])\cdot i(P_{h-3})+i(A_{n-1}-v_{k}%
^{(n-1)})\cdot\left(  x\cdot i(P_{h-3})+i(P_{h-2})\right)  =\\
&  =x\cdot i(P_{h-3})\cdot\left(  i(A_{n-1}-N[v_{k}^{(n-1)}])+i(A_{n-1}%
-v_{k}^{(n-1)})\right)  +i(A_{n-1}-v_{k}^{(n-1)})\cdot i(P_{h-2}).
\end{align*}
Setting $x=1$ in these polynomials we obtain
\begin{equation}
\Psi(A_{n})=\Psi(P_{h-3})\cdot\Psi(A_{n-1})+\Psi(A_{n-1}-v_{k}^{(n-1)}%
)\cdot\Psi(P_{h-2}). \label{For_E4}%
\end{equation}
Note that the same holds for $O_{n}$ and $M_{n}$. Now we will prove by
induction on $n$ the following three claims simultaneously
\begin{align*}
\Psi(O_{n-1}-v_{1}^{(n-1)})  &  \leq\Psi(A_{n-1}-v_{1}^{(n-1)}),\\
\Psi(A_{n-1}-v_{2}^{(n-1)})  &  \leq\Psi(M_{n-1}-v_{2}^{(n-1)}),\\
\Psi(O_{n})  &  <\Psi(A_{n})<\Psi(M_{n}).
\end{align*}
For $n=3,$ the first two claims follow from $A_{2}=O_{2}=M_{2}$ and the third
claim follows from (\ref{For_E4}), the fact that $A_{2}=O_{2}=M_{2}$ and
Corollary \ref{cor_E3}.

For $n>3$, let us suppose that $C^{(n-2)}$ of $A_{n}$ is in $j-$position. Then
we have
\begin{align*}
\Psi(A_{n-1}-v_{1}^{(n-1)})  &  =\Psi(A_{n-2})\cdot\Psi(P_{h-3})+\Psi
(A_{n-2}-v_{j}^{(n-2)})\cdot\Psi(P_{h-4}).\\
\Psi(O_{n-1}-v_{1}^{(n-1)})  &  =\Psi(O_{n-2})\cdot\Psi(P_{h-3})+\Psi
(O_{n-2}-v_{1}^{(n-2)})\cdot\Psi(P_{h-4}).
\end{align*}
Since
\[
\Psi(A_{n-2})>\Psi(O_{n-2})
\]
by induction assumption, and also
\[
\Psi(A_{n-2}-v_{j}^{(n-2)})>\Psi(A_{n-2}-v_{1}^{(n-2)})\geq\Psi(O_{n-2}%
-v_{1}^{(n-2)})
\]
by Corollary \ref{cor_E3} and induction assumption respectively, we obtain
\begin{equation}
\Psi(O_{n-1}-v_{1}^{(n-1)})<\Psi(A_{n-1}-v_{1}^{(n-1)}). \label{For_E4a}%
\end{equation}
In a similar fashion we have
\begin{align*}
\Psi(A_{n-1}-v_{2}^{(n-1)})  &  =\Psi(A_{n-2})\cdot\Psi(P_{h-4})+\Psi
(A_{n-2}-v_{j}^{(n-2)})\cdot\left(  \Psi(P_{h-4})+2\Psi\left(  P_{h-5}\right)
\right)  ,\\
\Psi(M_{n-1}-v_{2}^{(n-1)})  &  =\Psi(M_{n-2})\cdot\Psi(P_{h-4})+\Psi
(M_{n-2}-v_{2}^{(n-2)})\cdot\left(  \Psi(P_{h-4})+2\Psi\left(  P_{h-5}\right)
\right)  .
\end{align*}
Since
\[
\Psi(A_{n-2})<\Psi(M_{n-2})
\]
by induction assumption and
\[
\Psi(A_{n-2}-v_{j}^{(n-2)})<\Psi(A_{n-2}-v_{2}^{(n-2)})\leq\Psi(M_{n-2}%
-v_{2}^{(n-2)})
\]
by Corollary \ref{cor_E3} and induction assumption respectively, we obtain
\begin{equation}
\Psi(A_{n-1}-v_{2}^{(n-1)})<\Psi(M_{n-1}-v_{2}^{(n-1)}). \label{For_E4b}%
\end{equation}
The inequality
\[
\Psi(O_{n})<\Psi(A_{n})<\Psi(M_{n})
\]
now follows from (\ref{For_E4}) since $\Psi(O_{n})<\Psi(A_{n})<\Psi(M_{n}) $
by induction assumption and since inequalities (\ref{For_E4a}) and
(\ref{For_E4b}) hold.
\end{proof}

\bigskip

Note that Lemmas \ref{lm_E1} and \ref{lm_E2} (and consequently Corollary
\ref{cor_E3}) hold for general chain cacti. Therefore, theorem for general
chain cacti, analogous to Theorem \ref{tm_E4}, can be proved. The only
condition is that number of vertices in $i-$th cycle (for $i=1,\ldots,n$) must
be the same for $A_{n},$ $O_{n}$ and $M_{n}$ of general case.

\begin{theorem}
Let $A_{n}$ be a chain cactus of length $n,$ and let $O_{n}$ and $M_{n}$ be
ortho- and meta- chain cacti of length $n$ such that $A_{n}$, $O_{n}$ and
$M_{n}$ have the same number of vertices on cycle $C^{(i)}$ for every
$i=1,\ldots,n$. If $A_{n}\not =O_{n}$ and $A_{n}\not =M_{n}$, then
\[
\Psi(O_{n})<\Psi(A_{n})<\Psi(M_{n}).
\]

\end{theorem}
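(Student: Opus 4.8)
The plan is to mirror the proof of Theorem~\ref{tm_E4} essentially verbatim, replacing every occurrence of "$h$" that refers to the common cycle length by the position-specific value $h_i$, and checking that nothing in the argument actually used the uniformity of the cycle sizes. First I would re-derive the analogue of (\ref{For_E4}): for a chain cactus whose last cycle $C^{(n)}$ has $h_n$ vertices and whose penultimate cycle is attached at the cut vertex $v_k^{(n-1)}$, applying Theorems~\ref{tm_I1} and~\ref{tm_I2} to the non-cut vertex $v_1^{(n)}$ of the end cycle gives
\[
\Psi(A_n)=\Psi(P_{h_n-3})\cdot\Psi(A_{n-1})+\Psi(A_{n-1}-v_k^{(n-1)})\cdot\Psi(P_{h_n-2}),
\]
and the same identity holds for $O_n$ and $M_n$ with the \emph{same} coefficients $\Psi(P_{h_n-3})$ and $\Psi(P_{h_n-2})$, precisely because $A_n$, $O_n$, $M_n$ share the cycle length $h_n$. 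This is where the hypothesis "same number of vertices on each cycle" is consumed.

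Next I would run the same simultaneous induction on $n$ as before, proving the three statements
\[
\Psi(O_{n-1}-v_1^{(n-1)})\leq\Psi(A_{n-1}-v_1^{(n-1)}),\quad
\Psi(A_{n-1}-v_2^{(n-1)})\leq\Psi(M_{n-1}-v_2^{(n-1)}),\quad
\Psi(O_n)<\Psi(A_n)<\Psi(M_n).
\]
The base case $n=2$ (or $n=3$, depending on where one anchors) is immediate since $A_2=O_2=M_2$ whenever the two cycle lengths agree, and the third inequality then follows from the displayed recurrence together with Corollary~\ref{cor_E3}. For the inductive step, suppose $C^{(n-2)}$ of $A_n$ sits in $j$-position; expanding $\Psi(A_{n-1}-v_1^{(n-1)})$ and $\Psi(A_{n-1}-v_2^{(n-1)})$ one more step along the end cycle $C^{(n-1)}$ (again via Theorems~\ref{tm_I1}--\ref{tm_I2}) produces expressions in $\Psi(A_{n-2})$, $\Psi(A_{n-2}-v_j^{(n-2)})$ with coefficients that are polynomials in $\Psi(P_{h_{n-1}-\ell})$ for small $\ell$; crucially these coefficients match those for $O_{n-1}$ and $M_{n-1}$ because $C^{(n-1)}$ has the same length $h_{n-1}$ across the three cacti. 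Then Corollary~\ref{cor_E3} (which the excerpt already notes holds for general chain cacti, since Lemmas~\ref{lm_E1} and~\ref{lm_E2} were proved without any uniformity assumption) gives $\Psi(A_{n-2}-v_j^{(n-2)})$ strictly between $\Psi(A_{n-2}-v_1^{(n-2)})$ and $\Psi(A_{n-2}-v_2^{(n-2)})$, and the induction hypothesis handles $\Psi(O_{n-2})\leq\Psi(A_{n-2})\leq\Psi(M_{n-2})$ and the corresponding one-vertex-deleted quantities; combining yields the strict inequalities (\ref{For_E4a}) and (\ref{For_E4b}) in the non-uniform setting. Feeding these back into the recurrence closes the induction exactly as in Theorem~\ref{tm_E4}.

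The only genuine points needing care — and the place where I expect the real work to be — are bookkeeping ones: (i) verifying that when $C^{(n-2)}$ is in $j$-position with $j=1$ or $j=2$ the intermediate expansions degenerate gracefully (the $j$-position cut vertex coincides with one of the special vertices $v_1,v_2$), so that Corollary~\ref{cor_E3}'s strict inequality is still available or is replaced by an equality that does not break the chain of inequalities; (ii) checking the small-cycle edge cases $h_i\in\{3,4,5\}$, where some of the paths $P_{h_i-\ell}$ have negative or zero index and the conventions $i(P_{-1})=1$, $i(P_0)=1$ etc. must be used consistently, and where $O_n=M_n$ may force some inequalities to be non-strict; and (iii) confirming that at least one strict inequality survives at each level, which is what is actually needed to conclude $\Psi(O_n)<\Psi(A_n)<\Psi(M_n)$ whenever $A_n\neq O_n$ and $A_n\neq M_n$ — here one uses that $A_n\neq O_n$ forces some internal cycle to be in $j$-position with $j\geq2$, activating the strict part of Corollary~\ref{cor_E3}, and symmetrically for $A_n\neq M_n$. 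None of this requires new ideas beyond those in Lemmas~\ref{lm_E1}, \ref{lm_E2} and Theorem~\ref{tm_E4}; the theorem is essentially a remark that the earlier proof never used uniformity except through the matching of the coefficients $\Psi(P_{h_i-\ell})$, which the hypothesis guarantees.
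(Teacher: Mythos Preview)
Your proposal is correct and matches the paper's approach exactly: the paper's own proof consists of the single sentence ``Analogous to that of Theorem~\ref{tm_E4}'', and your write-up is precisely the unpacking of that analogy, with the $h_i$'s replacing $h$ and the observation that Lemmas~\ref{lm_E1}, \ref{lm_E2} and Corollary~\ref{cor_E3} were already stated for general chain cacti. If anything, your discussion of the bookkeeping issues (degenerate $j$-positions, small $h_i$, tracking where strictness enters) is more thorough than what the paper provides.
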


\begin{proof}
Analogous to that of Theorem \ref{tm_E4}.
\end{proof}

\bigskip

For the end, we can propose some directions for further study. It would be
interesting to establish in what relation tree cacti stand to chain cacti with
respect to total number of independent sets.

\section{Acknowledgements}

Partial support of the Ministry of Science, Education and Sport of the
Republic of Croatia (grants. no. 083-0831510-1511) and of project Gregas is
gratefully acknowledged.

\end{document}